\newtheorem{theorem}{Theorem}[section]
\newtheorem{lemma}[theorem]{Lemma}
\newtheorem{proposition}{Proposition}
\theoremstyle{definition}
\newtheorem{definition}[theorem]{Definition}
\newtheorem{remark}{Remark}
\newcommand{\stft}{short-time Fourier transform}
\def\epsilon{\varepsilon}
\newcommand{\beqa}{\begin{eqnarray*}}
\newcommand{\eeqa}{\end{eqnarray*}}
\newcommand{\field}[1]{\mathbb{#1}}
\newcommand{\bR}{\field{R}}        
\def\la{\lambda}
\def\cS{\mathcal{S}}
\def\cM{\mathcal{M}}
\def\rd{\bR^d}
\def\rdd{{\bR^{2d}}}
\def\R{\right)}
\def\<{\left<}
\def\>{\right>}
\def\mv1{M_v^1}
\def\phas{(x,\o )}
\def\mn{(m,n)}
\def\mn'{(m',n')}
\def\o{\eta}
\def\R{\mathbb{R}}
\def\Ren{\mathbb{R}^d}
\def\Fur{\mathcal{F}}
\def\Sn2{S_{2}(L^{2}(\Ren))}
\def\S1{S_{1}(L^{2}(\Ren))}
\def\sig00{\sigma_{0,0}}
\def\la{\langle}
\def\ra{\rangle}
\title[Schr\"odinger equations with rough Hamiltonians] 
      {Schr\"odinger equations with rough Hamiltonians}
\author[{Elena Cordero,  Fabio Nicola  and Luigi Rodino}]{}
\subjclass{Primary: 35S10; Secondary: 35Q41, 42B35}
 \keywords{Schr\"odinger equation, modulation spaces, Sj\"ostrand class, pseudodifferential operators, semilinear equations.}
\email{elena.cordero@unito.it}
\email{fabio.nicola@polito.it}
\email{luigi.rodino@unito.it}
\begin{document}
\maketitle

\centerline{\scshape Elena Cordero}
\medskip
{\footnotesize
 \centerline{Dipartimento di Matematica,
Universit\`a di Torino}
   \centerline{Via Carlo Alberto 10, 10123 Torino, Italy}
} 

\medskip

\centerline{\scshape Fabio Nicola}
\medskip
{\footnotesize
 \centerline{Dipartimento di Scienze Matematiche,
Politecnico di Torino}
   \centerline{Corso Duca degli Abruzzi 24, 10129 Torino, Italy}
}

\medskip

\centerline{\scshape Luigi Rodino}
\medskip
{\footnotesize
 \centerline{Dipartimento di Matematica,
Universit\`a di Torino}
   \centerline{Via Carlo Alberto 10, 10123 Torino, Italy}
} 

\bigskip

 \centerline{(Communicated by the associate editor name)}

\begin{abstract}
We consider a class of linear Schr\"odinger equations in $\rd$ with rough Hamiltonian, namely with certain derivatives in the Sj\"ostrand class $M^{\infty,1}$. We prove that the corresponding propagator is bounded on modulation spaces. The present results improve  several contributions recently appeared in the literature and can be regarded as the evolution counterpart of the fundamental result of Sj\"ostrand about the boundedness of pseudodifferential operators with symbols in that class.\par
Finally we consider nonlinear perturbations of real-analytic type and we prove local wellposedness of the corresponding initial value problem in certain modulation spaces.\end{abstract}

\section{Introduction}
It is well-known that the free Schr\"odinger propagator $e^{it\Delta}$ in $\R^d$ is not bounded on the Lebesgue spaces $L^p$, except for $p=2$. This has motivated the study in other function spaces arising in Harmonic Analysis. Among them recently much attention has been given to the so-called modulation spaces. They can be defined similarly to the Besov spaces, but dyadic annuli in the frequency domain are replaced by isometric boxes. Actually, for our purposes it will be more useful an equivalent definition, in terms of the short-time Fourier tranform, or Bargmann transform, of temperate distributions (see \cite{fei,book} or Section 2 below, where the weighted variant is considered). Namely, for $x,\omega\in\rd$, consider the translation and modulation operators  
\[
T_x f(y)=f(y-x),\quad M_\omega f(x)= e^{i\omega x} f(x),
\]
as well as the time-frequency shifts
\[
\pi(z) f= T_x M_\omega f, \quad z=(x,\omega)\in\rd\times\rd.
\]
The short-time Fourier tranform (STFT) of a temperate distribution $f\in\cS'(\rd)$ with respect to a Schwartz function $g\in\cS(\rd)$ is defined as
\[
V_gf(x,\omega)=\langle f, \pi(x,\omega) g\rangle
\]
(with the pairing $\langle \cdot,\cdot\rangle$ conjugate linear in the second factor). Then, for $1\leq p,q\leq\infty$ and $0\not\equiv g\in\cS(\rd)$, we define the modulation spaces
\[
M^{p,q}(\rd)=\{f\in\cS'(\rd): \|f\|_{M^{p,q}(\rd)}:=\|V_g f\|_{L^{p,q}}=\| V_g f\|_{L^{q}(\rd_\omega;L^p(\rd_x))}<\infty\},
\]
and $M^p(\rd)=M^{p,p}(\rd)$ (changing window yields equivalent norms). In particular we have $M^{2}(\rd)=L^2(\rd)$, whereas $L^2$-based Sobolev spaces can be regarded as weighted modulation spaces as well. In short, the modulation space norm measures the position-momentum (or time-frequency) concentration in phase space of a function.


Now, it was proved in \cite{bertinoro3,bertinoro57} that the propagator $e^{it\Delta}$ is in fact bounded on $M^{p,q}(\rd)$, $1\leq p,q\leq\infty$; see also \cite{kki1,kki2,kki3,kki4}. As shown in \cite{MNRTT}, in the case of the fractional Laplacian $(-\Delta)^{\kappa/2}$, with $\kappa>2$, a loss of derivatives instead occurs.\par
Local well-posedness  of the corresponding nonlinear equations, with nonlinearity of power-type, or even entire real-analytic, were also considered in \cite{bertinoro2,bertinoro12}. Remarkably, modulation spaces revealed to provide a good framework for the global wellposedness as well, as shown recently in \cite{baoxiang0,bertinoro57,bertinoro58,bertinoro58bis} for several dispersive equations. The main results in this connection are now available in the recent book \cite{baoxiang} or in the survey \cite{ruz}. \par
A strictly related issue is the sparsity of the Gabor matrix representation, in phase space, for the corresponding propagator; this property can be in fact considered as a microlocal form of boundedness and implies the boundedness on modulation spaces in the usual sense. However it contains much more refined information which is essential, e.g., for the problem of propagation of singularities; we refer to \cite{CNG,cgnr,cn2,cnr1,cnr2,g-ibero,GR,nicola,rochberg,tataru} and the references therein for more detail. \par
A case of special interest is given by the Schr\"odinger equation with potential, i.e.\ $D_t-\Delta+V(t,x)$, with $V$ real-valued. The literature about wellposedness in $L^2$-based Sobolev spaces is enormous and we refer e.g.\ to \cite{dancona} and the references therein. Concerning the wellposedness in modulation spaces, it was proved in \cite{kki4} that if $V$ is smooth and has quadratic growth, i.e.\ $\partial^{\alpha}_x V(t,x)\in L^\infty([0,T]\times\rd)$, for $|\alpha|\geq 2$, the corresponding propagator is bounded on $M^{p}(\rd)$, $1\leq p\leq\infty$ (this result also follows from \cite{cgnr}). Instead, if $V$ is ``first order'', i.e.\ $\partial^{\alpha}_x V(t,x)\in L^\infty([0,T]\times\rd)$, for $|\alpha|\geq 1$, one has boundedness on every $M^{p,q}(\rd)$, $1\leq p,q\leq\infty$. The main motivation of the present paper is an extension of these results in the case of non-smooth potential, say with minimal regularity. Indeed, our result will apply to more general equations. \par  Looking for optimal results, one is led to consider potentials with derivatives in the so-called Sj\"ostrand class, which is nothing but the modulation space $M^{\infty,1}(\rd)$. In fact, in the theory of pseudodifferential operator that function space was first introduced in \cite{sjostrand,wiener31} as a natural symbol class with minimal regularity which still gives rise to bounded operators on $L^2(\rd)$, and also on $M^{p,q}(\rd)$ \cite{book} (similar results hold for certain Fourier integral operators as well \cite{wiener6,wiener7,wiener8,cnr1}). Several deep results in Micolocal Analysis, such as the Fefferman-Phong inequality, keep valid for symbols (with a certain number of derivatives) in $M^{\infty,1}(\rdd)$ \cite{herau,lerner,lerner2}. Hence that class looks the natural choice when dealing with boundedness of linear operators in modulation spaces. \par
Let us now state our results in a simplified form for a model equation; we refer to Section 2 below for general statements, which involve weighted modulation spaces as well. \par
Denote by $\cM^{p,q}(\rd)$ and $\cM^p(\rd)$ the closure of the Schwartz space in the corresponding modulation spaces (hence $\cM^{p,q}(\rd)=M^{p,q}(\rd)$ if $p,q<\infty$). \par\begin{theorem}
Let $T>0$ be fixed. Consider the initial value problem  
\begin{equation}\label{equazione13bis}
\begin{cases}
D_tu+(-\Delta)^{\kappa/2}u+V_2(t,x) u+V_1(t,x) u+V_0(t,x) u=0\\
u(0)=u_0,
\end{cases}
\end{equation}
with $0<\kappa\leq 2$, $t\in[0,T]$, $x\in\rd$ ($D_t=-i\partial_t$). Suppose that $\partial^\alpha_x V_2(t,\cdot)\in M^{\infty,1}(\rd)$ for $|\alpha|=2$, $\partial^\alpha_x V_1(t,\cdot)\in M^{\infty,1}$ for $|\alpha|=1$, $V_0\in M^{\infty,1}(\rd)$, with $V_2$ and $V_1$ real-valued. Assume moreover a mild continuity dependence with respect to $t$ (narrow convergence; see Section 2 below) and let $1\leq p\leq \infty$.\par Then for every $u_0\in \mathcal{M}^p(\rd)$, there exists a unique solution $u\in C([0,T],\mathcal{M}^p(\rd))$ to \eqref{equazione13bis}. Moreover the corresponding propagator is bounded on $\mathcal{M}^p(\rd)$.
\end{theorem}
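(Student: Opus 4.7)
My plan is to split the Hamiltonian into its principal and sub-principal parts, treat the former by mollification and a compactness argument, and add the latter by a Duhamel iteration. Let $H_{pr}=(-\Delta)^{\kappa/2}+V_2+V_1$, which is formally self-adjoint because $V_2$ and $V_1$ are real-valued. Once the propagator $U_{pr}(t,s)$ of $D_t+H_{pr}$ has been defined and shown to be bounded on $\mathcal{M}^p(\rd)$ uniformly for $s,t\in[0,T]$, the full problem reduces to the Volterra equation
\[
u(t)=U_{pr}(t,0)u_0-i\int_0^t U_{pr}(t,s)\bigl[V_0(s,\cdot)u(s)\bigr]\,ds,
\]
which has a unique solution in $C([0,T],\mathcal{M}^p)$ by a standard contraction argument, using that multiplication by $V_0\in M^{\infty,1}(\rd)$ is bounded on every modulation space -- a well-known pointwise multiplication property of the Sj\"ostrand class.

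For the principal part I would mollify in $x$: set $V_j^\epsilon=V_j\ast\rho_\epsilon$ with $\rho_\epsilon$ a standard mollifier. The smoothed $V_j^\epsilon$ is $C^\infty$ with classically bounded derivatives of the relevant order, so the smoothed propagator $U_{pr}^\epsilon(t,s)$ exists and is $\mathcal{M}^p$-bounded by the classical theory for smooth Hamiltonians with at most quadratic growth (for $\kappa=2$ this is the result of \cite{kki4}; for $\kappa<2$ it follows analogously). Because $M^{\infty,1}$ is translation-invariant, $\|\partial^\alpha V_j^\epsilon\|_{M^{\infty,1}}\leq\|\partial^\alpha V_j\|_{M^{\infty,1}}$ uniformly in $\epsilon$, even though the $L^\infty$-norms of higher-order derivatives generally blow up. The central step is then the uniform estimate
\[
\sup_{\epsilon\in(0,1],\,s,t\in[0,T]}\|U_{pr}^\epsilon(t,s)\|_{\mathcal{M}^p\to\mathcal{M}^p}<\infty,
\]
which I would establish via the Gabor matrix $\langle U_{pr}^\epsilon(t,s)\pi(z)g,\pi(w)g\rangle$ for a fixed Gaussian window $g$. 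Following the phase-space program of \cite{cgnr,cnr1}, this matrix decays off the graph of the Hamilton flow of $|\xi|^\kappa+V_2^\epsilon+V_1^\epsilon$ with an integrable envelope whose $L^1$-norm is controlled by the $M^{\infty,1}$-norms of $\partial^\alpha V_2^\epsilon$ ($|\alpha|=2$) and $\partial^\alpha V_1^\epsilon$ ($|\alpha|=1$), not by their $L^\infty$-norms; a Schur test then delivers the uniform $\mathcal{M}^p$-boundedness.

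With the uniform bound in hand, I would pass to the weak-$^\ast$ limit $\epsilon\to 0$ in $\mathcal{M}^p$, using the narrow continuity of $t\mapsto V_j(t,\cdot)$ to identify the limit of $U_{pr}^\epsilon(t,s)u_0$ as the unique distributional solution of the principal-part Cauchy problem (narrow convergence of $V_j^\epsilon$ is stable against multiplication by equibounded sequences in $\mathcal{M}^p$). This defines $U_{pr}(t,s)$, transfers the uniform bound, and gives continuity in $t$ by equicontinuity of the approximating family. Uniqueness for the principal part follows from the formal self-adjointness of $H_{pr}$ combined with an $L^2$ energy estimate, and uniqueness for the full equation from the Volterra formulation via Gronwall; the propagator property $U(t,s)=U(t,r)U(r,s)$ is then automatic. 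The main obstacle is the uniform-in-$\epsilon$ Gabor-matrix estimate: classical phase-space bounds for Schr\"odinger propagators rely on $L^\infty$-control of several derivatives of the Hamiltonian, which explode under mollification of Sj\"ostrand-class symbols. Replacing $L^\infty$ by $M^{\infty,1}$ is precisely the refinement effected in Section 2 of the paper, and it rests on the Banach-algebra and Wiener-amalgam structure of the Sj\"ostrand class -- the same features that underlie Sj\"ostrand's original $L^2$ boundedness theorem for pseudodifferential operators.
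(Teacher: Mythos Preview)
Your strategy is quite different from the paper's. The paper does not mollify: it constructs a parametrix directly as a generalized localization operator,
\[
\tilde S(t,s)f=\iint e^{i\psi(t,x,\xi)-i\psi(s,x,\xi)}\pi(x^t,\xi^t)g\;V_gf(x^s,\xi^s)\,dx\,d\xi,
\]
where $(x^t,\xi^t)$ is the Hamilton flow of the second-order part and $\psi$ the associated action phase \eqref{fase}. The error $K(t,s)=(D_t+a^w)\tilde S(t,s)$ is shown, via a Taylor expansion of the symbol about $(x^t,\xi^t)$ together with Proposition~\ref{prod}, to be an operator of the same type and hence bounded on $\mathcal M^p_m$; existence follows from a Volterra correction and uniqueness from a Gronwall argument carried out directly on the STFT (see \eqref{p8}).

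Your plan has a genuine gap at its core. The uniform-in-$\epsilon$ Gabor-matrix estimate you need is not supplied by the references you invoke: \cite{cgnr,cnr1} treat Fourier integral operators whose phase and amplitude are \emph{given} in Sj\"ostrand-type classes, not propagators obtained by solving an evolution equation, and the bounds in \cite{kki4} depend on $L^\infty$ norms of \emph{all} derivatives of order $\geq 2$, which blow up under mollification of an $M^{\infty,1}$ potential. Saying that the replacement of $L^\infty$ by $M^{\infty,1}$ ``is precisely the refinement effected in Section~2'' is circular: Section~2 only states the results, while the mechanism that makes them work (Sections~3--5) is the parametrix construction above, not a limiting argument. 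In effect your uniform estimate is at least as hard as the theorem itself, and you have not indicated how to obtain it.

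Two further issues. For $\kappa<2$ the symbol $|\xi|^\kappa$ is singular at $\xi=0$, so the Hamilton flow of $|\xi|^\kappa+V_2^\epsilon$ is not globally well defined; the paper circumvents this by shifting $\chi(\xi)|\xi|^\kappa$ (with $\chi$ a cutoff near the origin) into the zeroth-order term, using Proposition~\ref{sopra} to place it in $M^{\infty,1}_{1\otimes v_r}$ for $r<\kappa$. Your plan omits this splitting. And your uniqueness argument for the principal part via an $L^2$ energy estimate for the self-adjoint $H_{pr}$ does not transfer to $\mathcal M^p$ for $p\neq 2$; the paper establishes uniqueness directly in $\mathcal M^p_m$ through \eqref{p8} and Gronwall's inequality.
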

In the case of first order potentials we have a better conclusion.
\begin{theorem} Suppose in addition $V_2\equiv0$ and let $1\leq p,q\leq \infty$. Then for every $u_0\in \mathcal{M}^{p,q}(\rd)$ there exists a unique solution $u\in C([0,T],\mathcal{M}^{p,q}(\rd))$ to \eqref{equazione13bis}. Moreover the corresponding propagator is bounded on $\mathcal{M}^{p,q}(\rd)$.
\end{theorem}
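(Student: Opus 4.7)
The plan is to exploit the structural simplification that $V_2\equiv 0$ induces in the classical Hamiltonian flow associated with $|\xi|^\kappa+V_1(t,x)+V_0(t,x)$: one has $\dot x=\kappa|\xi|^{\kappa-2}\xi$ (depending only on $\xi$) and $\dot\xi=-\nabla_xV_1$ (depending only on $x$), with no coupling mixing position and momentum, as would occur for a nontrivial quadratic $V_2$. Quantum-mechanically this suggests that the Gabor matrix of the propagator should be dominated by a kernel of the form $K(z-\chi_t(z'))$ with $\chi_t$ a phase-space translation, and such kernels are bounded on every mixed-norm sequence space $\ell^{p,q}(\zdd)$ by Young's inequality, in contrast with the metaplectic/linear-symplectic kernels arising when $V_2\not\equiv 0$, which only yield boundedness on the diagonal $\ell^{p,p}$.

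Concretely, I would pass to the interaction picture with respect to the free fractional Schr\"odinger group $U_0(t)=e^{-it(-\Delta)^{\kappa/2}}$, already known to be bounded on every $M^{p,q}(\rd)$, uniformly in $t\in[0,T]$, for $0<\kappa\leq 2$ (see \cite{bertinoro3, MNRTT}). Setting $v(t)=U_0(-t)u(t)$, the equation rewrites as $D_tv+\widetilde V(t)v=0$ with
\[
\widetilde V(t)=U_0(-t)\bigl(V_1(t,\cdot)+V_0(t,\cdot)\bigr)U_0(t).
\]
Boundedness of the original propagator on every $\mathcal M^{p,q}$ then reduces, via the Dyson expansion
\[
v(t)=\sum_{n\geq 0}(-i)^n\int_{0<s_1<\dots<s_n<t}\widetilde V(s_n)\cdots\widetilde V(s_1)v(0)\,ds_1\cdots ds_n,
\]
to uniform estimates $\|\widetilde V(s)w\|_{M^{p,q}}\leq C(s)\|w\|_{M^{p,q}}$ with $\int_0^TC(s)\,ds<\infty$, with constants independent of $p,q$.

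The core --- and main obstacle --- is precisely this $M^{p,q}$-boundedness of $\widetilde V(s)$. For the bounded piece $V_0\in M^{\infty,1}(\rd)$, multiplication by $V_0$ is a Weyl operator with symbol $1\otimes V_0\in M^{\infty,1}(\rdd)$, and I would argue that conjugation by $U_0(\pm s)$ leaves the Sj\"ostrand class invariant because the underlying flow is a translation in $x$ by $s\kappa|\xi|^{\kappa-2}\xi$ (a dyadic decomposition in frequency handles the singularity at $\xi=0$ when $\kappa<2$); Sj\"ostrand's boundedness theorem then gives $M^{p,q}$-boundedness for all $p,q$. For the linearly growing piece $V_1$, multiplication itself is not $M^{p,q}$-bounded, but $U_0(-s)V_1U_0(s)$ can be expanded via a Moyal/Egorov-type calculus into a Weyl operator whose symbol decomposes into a principal term built from $\nabla_xV_1\in M^{\infty,1}(\rd)$ composed with the smooth flow, plus lower-order remainders, all controllable in $M^{\infty,1}(\rdd)$. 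Finally, the low $x$-regularity of $V_0,V_1$ and their mere narrow continuity in $t$ are handled by approximation with Schwartz symbols in the narrow topology on $M^{\infty,1}$: the uniform bounds above let us pass to the limit in the Dyson series and close the fixed-point argument.
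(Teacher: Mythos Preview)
There is a genuine gap in your treatment of $V_1$. Your Dyson-series argument requires $\widetilde V(s)=U_0(-s)\bigl(V_1(s,\cdot)+V_0(s,\cdot)\bigr)U_0(s)$ to be bounded on $M^{p,q}$, but $V_1$ is only assumed to satisfy $\partial^\alpha V_1\in M^{\infty,1}$ for $|\alpha|=1$ and may therefore grow linearly; take $V_1(x)=x_1$. Conjugation by the bounded, boundedly invertible operators $U_0(\pm s)$ cannot turn an unbounded operator into a bounded one: for $\kappa=2$ one computes $U_0(-s)\,x_1\,U_0(s)=x_1+2sD_1$, whose Weyl symbol $x_1+2s\xi_1$ is not in $M^{\infty,1}(\rdd)$. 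Your Egorov heuristic is mis-applied: the principal symbol of the conjugated operator is $V_1\circ\chi_s^{-1}$, which inherits the linear growth of $V_1$; the gradient $\nabla_xV_1$ shows up only in the subprincipal corrections, not as a replacement for the leading term. Hence $\widetilde V(s)$ is unbounded on every $M^{p,q}$ and the Dyson series does not converge in operator norm.

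The paper avoids this by never treating $V_1$ perturbatively. Its parametrix is a generalized localization operator that transports Gabor atoms along the Hamiltonian flow $\chi$ of the \emph{second-order} symbol $a_2(t,\xi)=|\xi|^\kappa$ only, while the first-order potential $V_1$ is absorbed into a unimodular phase $e^{i\psi}$ (so its linear growth is harmless). Passing from $\mathcal M^{p}$ to $\mathcal M^{p,q}$ then reduces to a single observation: since $a_2$ is independent of $x$, one has $\dot\xi^t=0$, so the flow is triangular, $\chi(y,\eta)=(\beta(y,\eta),\eta)$ with $\det(\partial\beta/\partial y)=1$. The change of variable $y\mapsto\beta(y,\eta)$ at each fixed $\eta$ shows that $F\mapsto F\circ\chi^{-1}$ is bounded on every mixed-norm space $L^{p,q}(\rdd)$, which is all that is needed to upgrade Proposition~\ref{lemma2} and the whole parametrix machinery. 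Note that this is a $\xi$-preserving shear, not a phase-space translation as your opening paragraph suggests; your flow, with $\dot x=\dot x(\xi)$ and $\dot\xi=-\nabla_xV_1(x)$, is genuinely coupled and does not yield a kernel of the form $K(z-\chi_t(z'))$.
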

It is shown in Remark \ref{remark5.2} that similar results do not hold if one replaces $\cM^\infty(\rd)$ by the larger space $M^\infty(\rd)$. 


For comparison, observe that $C^{d+1}(\rd)\subset M^{\infty,1}(\rd)\subset L^\infty(\rd)$, but functions in $M^{\infty,1}$ generally do not possess any derivative. Hence, the above results represent a significant improvement of those in \cite{kki4}. \par
Notice that in the special case of $0$th order potentials $(V_2=V_1\equiv0)$ more refined results were obtained in \cite{cgnr2}, where the propagator was shown to be a generalized metaplectic operator; see also \cite{cnr3} for the corresponding problem of propagation of singularities.\par   
Actually, as anticipated, we will consider much more general equations, namely of the form $D_t u+ a^w(t,x,D_x)u=0$,
where $a(t,x,\xi)$ is a second order pseudodifferential operator, whose symbol has some derivatives in $M^{\infty,1}(\rdd)$. In this connection our results can be regarded as the evolution counterpart of the boundedness results for pseudodifferential operators with symbols in $M^{\infty,1}(\rdd)$ proved in \cite{sjostrand,book}. Moreover, to our knowledge, they contain as special cases all the known continuity results of Schr\"odinger propagators on modulation spaces.\par
 The proof of wellposedness relies on the construction of a parametrix for the forward Cauchy problem, in the spirit of \cite{kt,smith,staffilani,tataru,tataru2}. Namely, one decomposes the initial datum in coherent states $T_xM_\omega g$, i.e.\ Gabor atoms, where $g$ is a fixed window; then a parametrix is constructed as a generalized localization operator in phase space (a type of operators introduced in their basic form in \cite{daubechies}) that moves the Gabor atoms in phase space according to the Hamiltonian flow, together with a phase shift. Notice that at this low level of regularity the more classical approach via WKB expansions and Fourier integral operators turns out inapplicable.
\par Finally, we will consider the corresponding nonlinear equations, with a nonlinearity $F(u)$ which is entire real-analytic in $\mathbb{C}$ (e.g.\ a polynomial in $u,\overline{u}$) and we will prove that all the above results extend in that setting if the initial datum is in $\cM^1(\rd)$ or even in $\cM^{p,1}(\rd)$ (in the case of first order potentials), at least for small time.\par\medskip
Briefly, the paper is organized as follows. In Section 2 we recall the main definition and properties of modulation spaces and we state the results in full generality. Section 3 is devoted to some preliminary estimates, whereas in Section 4 we introduce a class of generalized localization operators which will appear subsequently. Section 5 is devoted to the proof of the main results (linear case). Finally Section 6 deals with the extension to nonlinear equations, at least for small time.

\section{Notation and statement of the results}
\subsection{Weyl quantization \cite{hormanderIII}} 
The Fourier transform is normalized as
\[
\widehat{f}(\xi)=\Fur f(\xi)=\int_{\rd} e^{-ix\xi} f(x)\, dx,
\]
and the Weyl quantization of a symbol $a(x,\xi)$ is correspondingly defined as
\[
a^w f(x)=a^w(x,D) f=\iint_{\rdd} e^{i(x-y)\xi} a\Big(\frac{x+y}{2},\xi\Big) f(y) dy\, d\xi.
\]
We recall the following easy properties, which can be checked directly:
\begin{equation}\label{comm}
[a^w,x_j]=(D_{\xi_j} a)^w,\qquad [a^w,D_j]=-(D_{x_j}a)^w 
\end{equation}
\begin{equation}\label{comm1}
(x_j a)^w f=a^w (x_j f)-\frac{1}{2}(D_{\xi_j} a)^w f
\end{equation}
\begin{equation}\label{comm2}
(\xi_j a)^w f=\frac{1}{2} (D_{x_j}a)^w f+a^w(D_{x_j} f).
\end{equation}
\subsection{Modulation spaces \cite{fei,book}} We have already defined in the Introduction the time-frequency shifts and the STFT of a temperate distribution, as well as the unweighted modulation spaces $M^{p,q}(\rd)$. Here we extend the definition in the presence of a weight.\par 
We consider a positive submultiplicative even continuous function $v$ in $\rdd$ ($v(z+w)\lesssim v(z)v(w)$, $v(-z)=v(z)$); moreover we suppose in the sequel that $v$ satisfies 
\begin{equation}\label{condpesi}
v(\tau z)\leq  C v(z),\quad 0<\tau<1,\ z\in\rdd
\end{equation}
as well as 
\begin{equation}\label{condpesi2}
v(z)\leq C\langle z\rangle ^N
\end{equation}
for some $C,N>0$.\par
We denote by $\cM_v$ the space of $v$-moderate weight $m$, i.e.\ positive continuous  function $m>0$ in $\rdd$ such that $m(z+w)\lesssim v(z)m(w)$, satisfying in addition the following condition: for every constant $C_1>0$ there exists $C_2>0$ such that, for $z,w\in\rdd$,
\begin{equation}\label{condpesi3}
|z|\leq C_1 |w| \Longrightarrow m(z)\leq C_2 m(w).
\end{equation}
This implies that $m\circ\chi\asymp m$ if $\chi:\rdd\to\rdd$ is any invertible transformation, Lipschitz together with its inverse.  As prototype one can consider the standard weights
\[
v_r(z)=\langle z\rangle^r=(1+|z|^2)^{r/2},\quad z\in\rdd,
\]
for which we have $v_s\in \cM_{v_r}$ if and only if $|s|\leq r$. \par
Now, for $1\leq p,q\leq \infty$, $m\in\cM_v$ and $0\not\equiv g\in\cS(\rd)$, we define the spaces
\[
M^{p,q}_m(\rd)=\{f\in\cS'(\rd): \|f\|_{M^{p,q}_m(\rd)}:=\|V_g f\|_{L^{p,q}_m}=\| \|mV_g f\|_{L^{q}(\rd_\omega;L^p(\rd_x))}<\infty\}.
\]
It can be proved that changing windows in $\cS(\rd)$ (on even in $M^1_v(\rd)$) yields equivalent norms.
Briefly we write $M^p_m(\rd)=M^{p,p}_m(\rd)$.  Also, we set $\cM^{p,q}_m(\rd)$ for the closure of the Schwartz space in $M^{p,q}_m(\rd)$ (so that $\cM^{p,q}_m(\rd)=M^{p,q}_m(\rd)$ when $p,q<\infty$), and $\cM^p_m(\rd)=\cM^{p,p}_m(\rd)$. \par
We now recall the definition of the narrow convergence. In this connection there are several related definitions in the literature; the present one has first appeared in \cite{sjostrand} and is different e.g.\ from that in \cite{wiener8}.  
\begin{definition} Let $\Omega$ be a subset of some Euclidean space. We say that a map $\Omega\ni \zeta\mapsto a_\zeta\in M^{\infty,1}_{1\otimes v}(\rdd)$ is continuous for the narrow convergence if it is continuous in $\cS'(\rdd)$ (weakly) and if there exists a function $H\in L^1_v(\rdd)$ such that $\sup_{z\in \rdd}|V_g a_\zeta (z,w)|\leq H(w)$ for every $\zeta\in\Omega$ and almost every $w\in\rdd$. 
\end{definition}
Here $0\not\equiv g\in\cS(\rdd)$ is a fixed window and the definition is independent of the choice of $g$, as a consequence of Lemma \ref{changewind} below. It is also clear from the very definition that the set of symbols $\{a_\zeta:\,\zeta\in\Omega\}$ is then bounded in $ M^{\infty,1}_{1\otimes v}(\rdd)$. 
\subsection{Statement of the results}
 Let $T>0$ be fixed. Consider the Cauchy problem 
 \begin{equation}\label{equazione}
\begin{cases}
D_t u+ a^w(t,x,D_x)u=0\\
u(0)=u_0,
\end{cases}
\end{equation}
with $t\in [0,T]$, $x,\xi\in\rd$, and $a$ specified below.\par This is our main result. 
 \begin{theorem}\label{teo1} Let $a(t,x,\xi)=\sum_{j=0}^2 a_j(t,x,\xi)$, $t\in [0,T]$, $x,\xi\in\rdd$, where $a_2$, $a_1$ are real-valued and suppose that, for $j=0,1,2$ and  $j\leq|\alpha|\leq 2j$, the map 
\begin{equation}\label{due}
t\mapsto \partial^\alpha a_j(t,\cdot)\textit{ is continuous in }M^{\infty,1}_{1\otimes v}(\rdd)\textit{ for the narrow convergence}.
\end{equation}
Let $1\leq p\leq \infty$, $m\in \cM_v$. For every $u_0\in \mathcal{M}^p_m(\rd)$, there exists a unique solution $u\in C([0,T],\mathcal{M}^p_m(\rd))$ to \eqref{equazione}. Moreover the corresponding propagator is bounded on $\mathcal{M}^p_m(\rd)$.
\end{theorem}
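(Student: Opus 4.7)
The plan is to build a short-time parametrix for \eqref{equazione} out of coherent states, read off its boundedness on $\cM^p_m(\rd)$ from the off-diagonal decay of its Gabor matrix, and close the argument by a Duhamel perturbation. The narrow-continuity hypothesis is used to make the construction uniform in $t\in[0,T]$ and to control the low-order and error terms via the Sj\"ostrand algebra.

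First I would reduce to the principal part. By the commutator identities \eqref{comm}--\eqref{comm2}, the operators $a_1^w(t)$ and $a_0^w(t)$ are built from symbols whose relevant derivatives lie in $M^{\infty,1}_{1\otimes v}(\rdd)$, hence they act boundedly on $\cM^{p,q}_m(\rd)$ by the modulation-space version of Sj\"ostrand's theorem recalled in \cite{book}. Consequently \eqref{equazione} differs from $D_t u+a_2^w u=0$ by a bounded perturbation on $\cM^p_m$, which a Duhamel iteration handles provided the principal propagator has been constructed; narrow continuity in $t$ gives the uniform constants needed for the iteration.

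Second, for the principal Hamiltonian $a_2(t,\cdot)$, I would build a wavepacket parametrix in the spirit of \cite{kt,smith,tataru,tataru2,cnr1}. Fix a Gaussian window $g$ so that $u_0=c_g\int_{\rdd} V_g u_0(z)\,\pi(z)g\,dz$. Since $\partial^\alpha a_2(t,\cdot)\in L^\infty(\rdd)$ for $|\alpha|=2$ (via $M^{\infty,1}\hookrightarrow L^\infty$), the Hamilton vector field $H_{a_2}$ is globally Lipschitz and generates a bi-Lipschitz flow $\chi_{t,s}:\rdd\to\rdd$, uniformly in $0\le s\le t\le T$. Define the approximate propagator
\[
U(t,s)u_0 = c_g\int_{\rdd} V_g u_0(z)\, e^{i\Phi(t,s,z)}\,\pi(\chi_{t,s}z)\,g\,dz,
\]
with $\Phi$ the classical action accumulated along the bicharacteristic through $z$. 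This is a generalized localization operator of the type introduced in Section~4, and its boundedness $\cM^p_m\to\cM^p_m$ follows, via a Schur test, from the off-diagonal kernel estimate
\[
|\langle U(t,s)\pi(z)g,\pi(w)g\rangle|\lesssim H_{t-s}(w-\chi_{t,s}z)
\]
with $H_{t-s}\in L^1_v(\rdd)$; the weight is absorbed thanks to \eqref{condpesi3} applied to the bi-Lipschitz change of variable $\chi_{t,s}$.

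The decisive step, and the main obstacle, is controlling the error $R(t,s)=(D_t+a_2^w(t))U(t,s)$. A Taylor expansion of $a_2$ around $\chi_{t,s}z$ would ordinarily produce a tractable remainder, but at this regularity one only controls $\partial^\alpha a_2$ for $|\alpha|=2$ in $M^{\infty,1}$, so pointwise or stationary-phase arguments are not available. The proper approach is to identify $R(t,s)$ as a composition of a Weyl operator whose full symbol lies in $M^{\infty,1}_{1\otimes v}(\rdd)$ with $U(t,s)$, and to invoke Sj\"ostrand's theorem together with the algebra property $M^{\infty,1}\cdot M^{\infty,1}\subset M^{\infty,1}$ to obtain an operator bound on $\cM^p_m$ with constants uniform in $s,t\in[0,T]$ by narrow continuity. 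A Duhamel--Volterra iteration $V(t,s)=U(t,s)-i\int_s^t V(t,\sigma)R(\sigma,s)\,d\sigma$, combined with Gronwall's inequality, then produces the exact propagator on $[0,T]$ with the same $\cM^p_m$ bound, yielding existence, uniqueness, and the claimed operator continuity.
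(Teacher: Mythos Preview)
There is a genuine gap in your reduction step. You claim that $a_1^w(t)$ acts boundedly on $\cM^p_m(\rd)$ and can therefore be absorbed by Duhamel. But the hypothesis only puts $\partial^\alpha a_1(t,\cdot)$ in $M^{\infty,1}_{1\otimes v}$ for $1\le|\alpha|\le 2$; the symbol $a_1(t,\cdot)$ itself may grow linearly in $(x,\xi)$ and is \emph{not} in the Sj\"ostrand class. Hence $a_1^w(t)$ is only bounded $\cM^p_{v_1 m}\to\cM^p_m$ (cf.\ Proposition~\ref{pro3.4}), not $\cM^p_m\to\cM^p_m$, and it cannot be treated as a bounded perturbation. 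The paper handles this by building $a_1$ into the phase: the action $\psi$ in \eqref{fase} contains the term $-\int_0^t a_1(s,x^s,\xi^s)\,ds$, so that after the computation \eqref{p4}--\eqref{p5} the remainder symbol $b$ involves only the \emph{first}-order Taylor remainder of $a_1$, i.e.\ $\partial^\alpha a_1$ for $|\alpha|\ge 1$, which is controlled. If you move the wavepackets along the $a_2$-flow with only the classical $a_2$-action in $\Phi$, the error $R(t,s)$ still contains the full $a_1^w U(t,s)$, which you cannot bound on $\cM^p_m$.

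Your description of the error control is also off target. The remainder symbol $b$ in \eqref{p5} is genuinely second order in $(y,\eta)$ (it carries the factors $y^\alpha\eta^\beta$, $|\alpha|+|\beta|\le 2$), so it is \emph{not} a Weyl symbol in $M^{\infty,1}_{1\otimes v}(\rdd)$, and $R(t,s)$ does not factor as $c^w\circ U(t,s)$ with $c\in M^{\infty,1}$. What makes the error bounded is that $b^w(t,x,\xi,\cdot,\cdot)$ is applied to the fixed Schwartz window $g$: using \eqref{eq0} one writes $b^w=\sum a_{\alpha,\beta}^w\, y^\alpha D_y^\beta$ with $a_{\alpha,\beta}\in M^{\infty,1}_{1\otimes v}$ narrowly continuous in $(t,x,\xi)$, and then $G:=b^w g$ satisfies $|V_g G(t,x,\xi;z,w)|\le H(w)$ with $H\in L^1_v$ uniformly. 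The error $K(t,s)$ is then a generalized localization operator in the sense of Proposition~\ref{lemma2}, and its $\cM^p_m$-boundedness comes from the Schur-type estimate there, not from Sj\"ostrand's theorem applied to a global symbol.
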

In the case of symbols that can be written as a sum of symbols depending only on $x$ or $\xi$ less regularity may be assumed. 

 \begin{theorem}\label{teo2} Let $a(t,x,\xi)=\sum_{j=0}^2 \sigma_j(t,\xi)+V_j(t,x)$, $t\in [0,T]$, $x,\xi\in\rdd$, where $\sigma_j$, $V_j$, $j=1,2$, are real-valued and suppose that, for $j=0,1,2$ and  $|\alpha|=j$, the map
\begin{equation}\label{tre}
t\mapsto \partial^\alpha \sigma_j(t,\cdot), \partial^\alpha V_j(t,\cdot)\textit{ is continuous in }M^{\infty,1}_{1\otimes v}(\rd)\textit{ for the narrow convergence}.
\end{equation}
Let $1\leq p\leq \infty$, $m\in \cM_v$. For every $u_0\in \mathcal{M}^p_m(\rd)$, there exists a unique solution $u\in C([0,T],\mathcal{M}^p_m(\rd))$ to \eqref{equazione}. Moreover the corresponding propagator is bounded on $\mathcal{M}^p_m(\rd)$.
\end{theorem}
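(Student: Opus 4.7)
The plan is to adapt the parametrix construction used for Theorem~\ref{teo1} to the separable structure $a=\sum_{j=0}^{2}\sigma_j(t,\xi)+V_j(t,x)$, exploiting the decoupling of the Hamiltonian to bring the regularity requirement down to the bare hypothesis \eqref{tre}. As outlined in the Introduction, on a short time interval I would seek the approximate propagator as a generalized localization operator of the kind treated in Section~4, namely
\[
E(t)u_0=\iint_{\rdd} e^{i\Phi(t,z)}\,\pi(\chi_t(z))g\,\langle u_0,\pi(z)g\rangle\,dz,
\]
where $g\in\cS(\rd)$ is a fixed Schwartz window, $\chi_t$ is the Hamiltonian flow associated with $a(t,\cdot,\cdot)$, and $\Phi$ is a phase correction to be determined. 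In our setting the flow decouples into $\dot x_t=\nabla_\xi\sigma(t,\xi_t)$, $\dot\xi_t=-\nabla_x V(t,x_t)$; the embedding $M^{\infty,1}\hookrightarrow L^\infty$ applied to \eqref{tre} shows that $\nabla_x V_2$, $\nabla_\xi\sigma_2$ are uniformly Lipschitz in the space variable and that $\nabla_x V_1$, $\nabla_\xi\sigma_1$ are uniformly bounded, so $\chi_t$ is globally defined, bi-Lipschitz on $\rdd$ with constants uniform in $t\in[0,T]$, and narrowly continuous in $t$.

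The first sub-step is to verify that $E(t)$ is bounded on $\cM^p_m(\rd)$ uniformly in $t\in[0,T]$. Since $\pi(\chi_t(z))g$ is just a time-frequency translate of the fixed Schwartz window, the Gabor matrix $\langle E(t)\pi(z)g,\pi(w)g\rangle$ is controlled by the autocorrelation $|V_g g|$ composed with the bi-Lipschitz change of variables $z\mapsto\chi_t(z)$; the localization-operator boundedness criteria developed in Section~4 then apply verbatim, together with the preservation of the weight $m$ under bi-Lipschitz maps coming from \eqref{condpesi3}.

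The core step is to estimate the error $R(t):=(D_t+a^w(t,x,D_x))E(t)$ as an operator on $\cM^p_m$. Computing $D_tE(t)$ and expanding $a^wE(t)$ by the Weyl calculus and the commutation identities \eqref{comm}--\eqref{comm2}, one chooses $\Phi(t,z)$ so as to cancel the principal contribution along the flow (the standard Hamilton--Jacobi prescription in phase space). Every residual term then carries a derivative of $a$ as an amplitude, and here is the crucial point that makes Theorem~\ref{teo2} cheaper than Theorem~\ref{teo1}: when $a=\sum_{j}\sigma_j(\xi)+V_j(x)$, any mixed derivative $\partial_x^\alpha\partial_\xi^\beta a_j$ with $\alpha\neq0$ and $\beta\neq0$ vanishes identically, so the surviving residuals involve only the pure derivatives $\partial^\alpha\sigma_j$ and $\partial^\alpha V_j$ with $|\alpha|\leq j$, i.e.\ precisely the quantities controlled by \eqref{tre}. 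Sj\"ostrand's $M^{\infty,1}$-boundedness theorem then gives $\|R(t)\|_{\cM^p_m\to\cM^p_m}\leq C$, uniformly in $t\in[0,T]$, and the narrow continuity assumption in \eqref{tre} transmits to strong continuity in $t$ of $R(t)$ as an operator on $\cM^p_m$.

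With $E(t)$ bounded and $R(t)$ uniformly controlled, the exact propagator on a short interval is recovered by a Picard/Duhamel iteration $U(t)=E(t)-i\int_0^t U(t-s)R(s)\,ds$, and a composition argument covers the full interval $[0,T]$, yielding existence of $u\in C([0,T],\cM^p_m(\rd))$ and $\cM^p_m$-boundedness of the propagator. Uniqueness in $C([0,T],\cM^p_m(\rd))$ is obtained by a duality argument applied to the backward adjoint problem, which satisfies equivalent hypotheses because $\sigma_j$ and $V_j$ are real-valued for $j=1,2$. I expect the main obstacle to be the bookkeeping in the cancellation argument: one has to classify term by term every amplitude produced by the Weyl calculus on $a^w\circ E(t)$, confirm that no derivative of $a$ beyond those guaranteed by \eqref{tre} ever survives, and propagate the narrow continuity in $t$ through each composition so that $R(t)$ — and hence $U(t)u_0$ — depends continuously on $t$ in the correct topology.
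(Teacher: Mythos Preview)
Your overall architecture matches the paper's: the parametrix is a generalized localization operator along the Hamiltonian flow, the residual is shown to be bounded on $\cM^p_m$, and existence follows by iteration (Volterra/Duhamel), with the separable form of $a$ responsible for lowering the regularity threshold. That said, your explanation of \emph{why} the separable structure saves derivatives is not quite the right one, and as stated it would not close the argument.

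You write that ``any mixed derivative $\partial_x^\alpha\partial_\xi^\beta a_j$ with $\alpha\neq0$ and $\beta\neq0$ vanishes, so the surviving residuals involve only pure derivatives of order $\leq j$.'' But the extra derivatives required in Theorem~\ref{teo1} (orders $j+1,\dots,2j$ on $a_j$) do \emph{not} come from the Taylor expansion---that already stops at order $j$, cf.\ \eqref{p5}. They come from the subsequent passage to operators: to rewrite a symbol of the form $c(y,\eta)\,y^\alpha\eta^\beta$ as $c^w\, y^\alpha D^\beta$ one iterates \eqref{comm1}--\eqref{comm2}, and each application drops an extra derivative (of the ``other'' type) on $c$. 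The point in the separable case is that after Taylor expansion the amplitudes have the form $\partial^\alpha V_j(t,x^t+\tau y)\cdot y^\alpha$ (a function of $y$ alone times a $y$-monomial) or $\partial^\beta\sigma_j(t,\xi^t+\tau\eta)\cdot\eta^\beta$ (a function of $\eta$ alone times an $\eta$-monomial). For such products the corrections in \eqref{comm1}--\eqref{comm2} are $D_{\eta}$- (resp.\ $D_{y}$-) derivatives of a symbol that is independent of $\eta$ (resp.\ $y$), hence identically zero: the symbol factorization is an \emph{exact} operator factorization, and no derivative beyond order $j$ is ever produced. That is the mechanism singled out in the paper's proof; ``mixed derivatives of $a_j$ vanish'' is what makes the amplitudes pure in $(y,\eta)$, but it is the exact operator factorization that prevents higher pure derivatives from appearing.

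Two smaller points. Your Duhamel formula $U(t)=E(t)-i\int_0^t U(t-s)R(s)\,ds$ presupposes time-translation invariance, which fails here since the coefficients depend on $t$; you need a two-parameter family $S(t,s)$ and a Volterra equation as in Section~5. And uniqueness in the paper is obtained by a direct Gronwall estimate on $\|u(t)\|_{\cM^p_m}$ via the STFT (see \eqref{p8}), not by a duality/adjoint argument; your route is plausible but you would still have to justify that the adjoint problem is well-posed in the dual space, which is not automatic at this level of regularity.
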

\begin{theorem}\label{teo3}
Under the same assumption as in Theorem \ref{teo2}, suppose moreover that $V_2\equiv0$. Let $1\leq p,q\leq \infty$, $m\in\cM_v$. For every $u_0\in \mathcal{M}^{p,q}_m(\rd)$, there exists a unique solution $u\in C([0,T],\mathcal{M}^{p,q}_m(\rd))$ to \eqref{equazione}. Moreover the corresponding propagator is bounded on $\mathcal{M}^{p,q}_m(\rd)$.
\end{theorem}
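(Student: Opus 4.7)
The plan is to reuse the parametrix from the proof of Theorem~\ref{teo2} --- a generalized localization operator whose Gabor matrix is concentrated along the graph of the Hamiltonian flow --- and to refine the key boundedness estimate using the additional structure provided by $V_2\equiv 0$; once the parametrix is shown to map $\mathcal{M}^{p,q}_m(\rd)$ into itself, the full propagator is produced by the short-time Duhamel/Picard iteration and patching across $[0,T]$ already developed in Theorem~\ref{teo2}, and uniqueness follows as before. With $V_2\equiv 0$, the Hamiltonian at the relevant order reads $H(t,x,\xi)=\sigma_2(t,\xi)+\sigma_1(t,\xi)+V_1(t,x)$. Since $\partial^2\sigma_2$, $\nabla\sigma_1$ and $\nabla V_1$ all lie in $M^{\infty,1}\subset L^\infty$ uniformly in $t$ by the narrow continuity, Hamilton's equations $\dot X_t=\nabla_\xi(\sigma_2+\sigma_1)(t,\Xi_t)$, $\dot \Xi_t=-\nabla_x V_1(t,X_t)$ are uniquely solvable and the flow $\chi_t$ is globally Lipschitz with Lipschitz inverse. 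The absence of $V_2$ forces $|\Xi_t(x_0,\xi_0)-\xi_0|\le Ct$, and then, using that $\nabla_\xi\sigma_2$ is Lipschitz in $\xi$,
\[
\chi_t(x_0,\xi_0) = \bigl(x_0+F_t(\xi_0),\;\xi_0\bigr)+O(1)\quad\text{uniformly in } t\in[0,\delta],
\]
with $F_t(\xi)=\int_0^t\nabla_\xi\sigma_2(s,\xi)\,ds$, so $\chi_t = S_t\circ\tau_t$, where $S_t(x,\xi)=(x+F_t(\xi),\xi)$ is a symplectic $x$-shear depending only on $\xi$, and $\tau_t$ is a Lipschitz diffeomorphism of $\rdd$ with globally bounded displacement.

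Section~4 provides a Gabor matrix bound
\[
|\langle P(t)\pi(z)g,\pi(w)g\rangle|\le K(w-\chi_t(z)),\qquad K\in L^1_v(\rdd),
\]
uniform for $t\in[0,\delta]$. Inserted into the reproducing formula for $V_g(P(t)f)$ and combined with the fact that $\chi_t$ is symplectic (hence measure-preserving), this reduces boundedness of $P(t)$ on $M^{p,q}_m$ to boundedness of the pull-back $F\mapsto F\circ\chi_t^{-1}$ on $L^{p,q}_m(\rdd)$. The factor $\tau_t^{-1}$ is Lipschitz with Lipschitz inverse and globally bounded displacement, so \eqref{condpesi3} yields $m\circ\tau_t^{-1}\asymp m$ and pull-back by $\tau_t^{-1}$ is bounded on $L^{p,q}_m$. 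For the shear factor,
\[
(F\circ S_t^{-1})(x,\xi)=F(x-F_t(\xi),\xi),
\]
whose $L^p(\rd_x)$-norm at fixed $\xi$ equals that of $F(\cdot,\xi)$ by translation invariance; taking the $L^q$-norm in $\xi$ gives an isometry of $L^{p,q}(\rdd)$, and the weight is again absorbed via \eqref{condpesi3}.

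The main obstacle is conceptual: identifying why the general $V_2\not\equiv 0$ case fails for $p\ne q$ exposes what is special here. If $V_2$ were nonzero, Hamilton's equations would add a $\xi$-shear $(x,\xi)\mapsto(x,\xi-t\nabla_x V_2(x))$ depending on $x$, and the composition of an $x$-shear with a $\xi$-shear generates a genuine symplectic rotation in each $(x_j,\xi_j)$-plane; such rotations are unbounded on $L^{p,q}$ for $p\ne q$, which is the obstruction preventing Theorem~\ref{teo2} from reaching the full anisotropic scale. Removing $V_2$ kills the $\xi$-shear, leaves only the $x$-shear, and this is the precise reason why $\mathcal{M}^{p,q}_m$ is preserved throughout the evolution.
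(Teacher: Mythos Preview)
Your overall strategy is right and matches the paper: reduce boundedness of the parametrix on $\mathcal{M}^{p,q}_m$ to boundedness of the pull-back $F\mapsto F\circ\chi^{-1}$ on $L^{p,q}_m(\rdd)$, and exploit the special form of the flow when $V_2\equiv 0$. However, there is a genuine gap in your execution, stemming from a misidentification of the Hamiltonian.

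In the parametrix of Theorem~\ref{teopara} (and hence of Theorem~\ref{teo2}) the flow $\chi$ is generated by $a_2$ \emph{alone}; the first-order piece $a_1$ enters only through the scalar phase $\psi$ in \eqref{fase}, not through Hamilton's equations \eqref{sistema}. With $V_2\equiv 0$ one has $a_2(t,x,\xi)=\sigma_2(t,\xi)$, so $\dot\xi^t=-\nabla_x a_2=0$ and the flow is \emph{exactly}
\[
\chi(t,s)(y,\eta)=(\beta(y,\eta),\eta),\qquad \det\frac{\partial\beta}{\partial y}=1.
\]
The paper then simply changes variable $y=\beta(y',\eta)$ at fixed $\eta$ and uses \eqref{condpesi3} to absorb the weight; no residual factor $\tau_t$ appears. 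Your version includes $\sigma_1$ and $V_1$ in the Hamiltonian, which forces $\Xi_t$ to depend on $x$ through $\dot\Xi_t=-\nabla_x V_1(t,X_t)$, and then you must handle a correction $\tau_t$ that moves the $\xi$-variable in an $x$-dependent way.

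Your claim that ``$\tau_t^{-1}$ is Lipschitz with Lipschitz inverse and globally bounded displacement, so pull-back by $\tau_t^{-1}$ is bounded on $L^{p,q}_m$'' is the gap: bi-Lipschitz plus bounded displacement does \emph{not} give boundedness on the mixed space $L^{p,q}$ for $p\neq q$. A concrete obstruction in dimension $d=1$ is $\tau(x,\xi)=(x,\xi+\sin x)$; testing on $F=\mathbf{1}_{[0,N]}\otimes\mathbf{1}_{[0,\varepsilon]}$ gives $\|F\circ\tau^{-1}\|_{L^{p,q}}/\|F\|_{L^{p,q}}\asymp \varepsilon^{1/p-1/q}$, which blows up as $\varepsilon\to 0$ when $p>q$. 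Your $\tau_t$ is of exactly this type to leading order (with $\sin x$ replaced by $t\,\nabla_x V_1(0,x)$), so the step cannot be justified as written. The fix is immediate: use the correct flow from $a_2=\sigma_2$ only; then $\tau_t=\mathrm{Id}$, and your shear argument for $S_t$ (which is correct) is all that is needed.
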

Theorem \ref{teo2} applies of course to problems of the form 
\begin{equation}\label{equazione2}
\begin{cases} 
D_tu-\Delta u+V_2(t,x) u+V_1(t,x)u+V_0(t,x)u=0\\
u(0)=u_0,
\end{cases}
\end{equation}
under the hypotheses given there on the potentials $V_j$, $j=0,1,2$. If $V_2\equiv0$ then Theorem \ref{teo3} applies as well. We can also consider the case of the fractional Laplacian, i.e.\ 
\begin{equation}\label{equazione3}
\begin{cases} 
D_tu+(-\Delta)^{\kappa/2} u+V_2(t,x) u+V_1(t,x)u+V_0(t,x)u=0\\
u(0)=u_0,
\end{cases}
\end{equation}
with $0<\kappa\leq 2$. 
\begin{theorem}\label{teo4} Consider potentials $V_j(t,x)$, $j=1,2,3$ with $V_1$, $V_2$ real-valued, and suppose that, for $j=0,1,2$, $|\alpha|=j$, $0\leq r<\kappa$, the map
 \[
 t\mapsto \partial^\alpha_x V_j(t,\cdot)\textit{ is continuous in }M^{\infty,1}_{1\otimes v_r}(\rd)\textit{ for the narrow convergence}.
 \]
  Let $1\leq p\leq \infty$, $m\in \cM_{v_r}$. For every $u_0\in \mathcal{M}^p_m(\rd)$, there exists a unique solution $u\in C([0,T],\mathcal{M}^p_m(\rd))$ to \eqref{equazione3}, the corresponding propagator being bounded on $\mathcal{M}^p_m(\rd)$.\par
Suppose moreover $V_2\equiv0$. Let $1\leq p,q\leq \infty$, $m\in\cM_{v_r}$. For every $u_0\in \mathcal{M}^{p,q}_m(\rd)$, there exists a unique solution $u\in C([0,T],\mathcal{M}^{p,q}_m(\rd))$ to \eqref{equazione3}, the corresponding propagator being bounded on $\mathcal{M}^{p,q}_m(\rd)$.
\end{theorem}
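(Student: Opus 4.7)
The plan is to derive Theorem~\ref{teo4} directly from Theorems~\ref{teo2} and~\ref{teo3} by rewriting the symbol $a(t,x,\xi) = |\xi|^\kappa + V_2(t,x) + V_1(t,x) + V_0(t,x)$ in the form $\sum_{j=0}^2 \sigma_j(t,\xi) + V_j(t,x)$ required by those theorems. The hypotheses on the $V_j$-part are already built into the statement, so the whole problem reduces to splitting the Fourier multiplier $|\xi|^\kappa$ into components that fit the symbol hypothesis~\eqref{tre} with weight $v_r$, $0\leq r<\kappa$.

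I would fix a cutoff $\chi\in C^\infty_c(\rd)$ equal to $1$ in a neighborhood of the origin and decompose
\[
|\xi|^\kappa=\sigma_0(\xi)+\sigma_2(\xi),\qquad \sigma_0(\xi):=\chi(\xi)|\xi|^\kappa,\quad \sigma_2(\xi):=(1-\chi(\xi))|\xi|^\kappa,
\]
together with $\sigma_1\equiv 0$. All three are real-valued and independent of $t$, so narrow continuity in $t$ is trivial. The smooth tail $\sigma_2$ satisfies the classical symbol estimates $|\partial^\beta\sigma_2(\xi)|\lesssim\langle\xi\rangle^{\kappa-|\beta|}$; since $\kappa\leq 2$, for $|\alpha|=2$ one has $\partial^\alpha\sigma_2\in C^\infty_b(\rd)$ with all further derivatives bounded and in fact decaying at infinity. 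A direct computation of $V_g(\partial^\alpha\sigma_2)(x,\omega)$ followed by repeated integration by parts in the integration variable yields $\sup_x|V_g(\partial^\alpha\sigma_2)(x,\omega)|\lesssim\langle\omega\rangle^{-N}$ for every $N$, so $\partial^\alpha\sigma_2\in M^{\infty,1}_{1\otimes v_r}(\rd)$ for every $r\geq 0$.

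The delicate step, and main obstacle, is to show that the low-frequency piece $\sigma_0$ belongs to $M^{\infty,1}_{1\otimes v_r}(\rd)$ for $r<\kappa$. Although $\sigma_0$ is compactly supported, the H\"older-Zygmund regularity of $|\xi|^\kappa$ at the origin is the only smoothness available, and this is precisely what forces the restriction $r<\kappa$. The idea is to observe that $\sigma_0\in H^s(\rd)$ for every $s<\kappa+d/2$ (a direct computation from the Fourier side of $|\xi|^\kappa$), then exploit the compact support of $\sigma_0$ together with the Schwartz decay of the window $g$ to control the profiles $\sigma_0(\cdot)\overline{g(\cdot-x)}$ uniformly in $x\in\rd$, with the profiles furthermore vanishing rapidly as $|x|\to\infty$. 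Combined with the elementary Cauchy--Schwarz embedding $H^s(\rd)\hookrightarrow\mathcal{F}L^1_{v_r}(\rd)$, valid for $s>r+d/2$, and a dyadic splitting in $\omega$, one obtains $\int v_r(\omega)\sup_x|V_g\sigma_0(x,\omega)|\,d\omega<\infty$ precisely in the range $r<\kappa$, which is the desired membership in $M^{\infty,1}_{1\otimes v_r}$.

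With both components in place, the symbol $a(t,x,\xi)$ meets the hypotheses of Theorems~\ref{teo2} and~\ref{teo3} with $v=v_r$, $0\leq r<\kappa$. Theorem~\ref{teo2} therefore yields the first conclusion of Theorem~\ref{teo4}, and Theorem~\ref{teo3} the second, under the additional assumption $V_2\equiv 0$.
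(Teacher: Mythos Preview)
Your strategy is exactly the one the paper uses: split $|\xi|^\kappa$ via a cutoff $\chi$ into a smooth second-order tail $\sigma_2(\xi)=(1-\chi(\xi))|\xi|^\kappa$ and a compactly supported zeroth-order piece $\sigma_0(\xi)=\chi(\xi)|\xi|^\kappa$, check the hypotheses of Theorems~\ref{teo2} and~\ref{teo3} with $v=v_r$, and conclude. The treatment of $\partial^\alpha\sigma_2$ for $|\alpha|=2$ is standard and matches what the paper takes for granted.

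The one point where you and the paper diverge is the verification that $\sigma_0\in M^{\infty,1}_{1\otimes v_r}$ for $r<\kappa$. The paper does not go through Sobolev spaces; it quotes a pointwise STFT estimate (Proposition~\ref{sopra}, from \cite{cnr3}) asserting that for $h$ smooth outside the origin and positively homogeneous of degree $\kappa$, and $\chi\in C^\infty_c$, one has $|V_g(\chi h)(x,\omega)|\leq C\langle\omega\rangle^{-\kappa-d}$. Integrating against $v_r(\omega)$ then gives the result immediately. Your route via $\sigma_0\in H^s$ for $s<\kappa+d/2$ and the embedding $H^s\hookrightarrow\mathcal{F}L^1_{v_r}$ reaches the same conclusion, but as written it controls $\sup_x\int v_r(\omega)|V_g\sigma_0(x,\omega)|\,d\omega$ rather than $\int v_r(\omega)\sup_x|V_g\sigma_0(x,\omega)|\,d\omega$; the ``dyadic splitting in $\omega$'' does not by itself swap the $\sup$ and the integral. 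This is easily repaired---for instance by the crude bound $\sup_x|V_g\sigma_0(x,\omega)|\leq(|\widehat{\sigma_0}|\ast|\widehat g|)(\omega)$ together with $|\widehat{\sigma_0}(\eta)|\lesssim\langle\eta\rangle^{-d-\kappa}$, or by taking $g$ compactly supported and using Sobolev embedding in the (now compact) $x$-variable---so the argument is salvageable, but the paper's direct pointwise estimate is cleaner and avoids this issue entirely.
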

\section{Preliminary estimates}
In the sequel we will use the following covariance property of the STFT, which can be verified by direct inspection:
\begin{equation}\label{covarianza}
|\langle \pi(z) f,\pi(w)g\rangle|=|V_g f(w-z)|,\quad z,w\in\rdd.
\end{equation}
We also recall the following pointwise inequality of the \stft\
\cite[Lemma 11.3.3]{book}. It is 
useful when one needs to change window functions.
\begin{lemma}\label{changewind}
If  $g_0,g_1,\gamma\in\cS(\rd)$ such
that $\la \gamma, g_1\ra\not=0$ and
$f\in\cS'(\rd)$,  then the inequality
$$|V_{g_0} f\phas|\leq\frac1{|\la\gamma,g_1\ra|}(|V_{g_1} f|\ast|V_{\gamma} g_0|)\phas
$$
holds pointwise for all $\phas\in\rdd$.
\end{lemma}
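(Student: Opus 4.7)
This is a standard change-of-window estimate for the short-time Fourier transform, and my plan is to combine the STFT inversion formula with the covariance property \eqref{covarianza}.

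Since $\langle\gamma,g_1\rangle\neq 0$ and $g_1,\gamma\in\mathcal{S}(\rd)$, I would first invoke the weak inversion formula: every $f\in\mathcal{S}'(\rd)$ admits the reconstruction
\[
f=\frac{1}{\langle\gamma,g_1\rangle}\int_{\rdd}V_{g_1}f(z')\,\pi(z')\gamma\,dz',
\]
to be read weakly in $\mathcal{S}'(\rd)$. This is legitimate because $V_{g_1}f$ is a continuous function of at most polynomial growth (the windows being in $\mathcal{S}(\rd)$), so pairing against any Schwartz function yields an absolutely convergent integral to which Fubini applies.

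Pairing both sides against $\pi(z)g_0\in\mathcal{S}(\rd)$, with $z=(x,\omega)$, and using the very definition of the STFT produces the exact identity
\[
V_{g_0}f(z)=\frac{1}{\langle\gamma,g_1\rangle}\int_{\rdd}V_{g_1}f(z')\,\langle\pi(z')\gamma,\pi(z)g_0\rangle\,dz'.
\]
Taking absolute values inside the integral and applying the covariance identity \eqref{covarianza} (together with the fundamental symmetry $|V_{g_0}\gamma(\cdot)|=|V_\gamma g_0(-\cdot)|$ of the STFT modulus) I would identify $|\langle\pi(z')\gamma,\pi(z)g_0\rangle|=|V_\gamma g_0(z-z')|$. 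A routine affine change of variable then rewrites the right-hand side as $|\langle\gamma,g_1\rangle|^{-1}(|V_{g_1}f|\ast|V_\gamma g_0|)(z)$, which is precisely the claimed pointwise bound.

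The only delicate aspect is the justification of the weak inversion formula for arbitrary tempered distributions and the interchange of the $\mathcal{S}'$--$\mathcal{S}$ pairing with the vector-valued integral; both are standard consequences of the rapid decay of the Schwartz windows (see e.g.\ Chapter 11 of \cite{book}). No substantive obstacle is expected, and the result itself is used in the sequel essentially as a black box to change windows in the estimates for $V_g f$.
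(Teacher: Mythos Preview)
Your argument is correct and is exactly the standard proof of this change-of-window estimate. Note that the paper does not prove this lemma at all; it simply cites it from \cite[Lemma 11.3.3]{book}, where the proof proceeds precisely as you outline: apply the inversion formula with the synthesis window $\gamma$, take the STFT with window $g_0$, and estimate the resulting integral kernel via the covariance identity. So your proof is essentially the one in the cited reference, and there is nothing further to compare.
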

The following lemma is proved in \cite[Lemma 3.1]{g-ibero}.
\begin{lemma}\label{lemmag}
Let $g\in\cS(\rd)$, $a\in\cS'(\rdd)$. For some window $\Phi\in\cS(\rdd)$ it turns out, setting $j(z_1,z_2)=(z_2,-z_1)$, 
 \[
 |\langle a^w(x,D) \pi(z)g,\pi(w)g \rangle|\leq \sup_{\omega\in\rdd} |V_{\Phi}a(\omega,j(w-z))|,\qquad z,w\in\rdd.
 \]
 \begin{proposition}\label{proregolarita} Let $[0,T]\ni t\mapsto a_t$ be a continuous map in $M^{\infty,1}_{1\otimes v}(\rdd)$ for the narrow convergence. Then $a_t(x,\xi)$ is continuous as a function of $t, x,\xi$. \end{proposition}
  \begin{proof} Clearly it suffices to consider the un-weighted case ($v\equiv1$), since $M^{\infty,1}_{1\otimes v}\hookrightarrow M^{\infty,1}$.  Let $z=(x,\xi)$, $z_0=(x_0,\xi_0)$. Then 
  \begin{equation}\label{30-u}
|\langle a_t,\delta_z\rangle-\langle a_{t_0},\delta_{z_0}\rangle|\leq |\langle a_t-a_{t_0},\delta_{z_0}\rangle| +|\langle a_t,\delta_z-\delta_{z_0}\rangle|.
\end{equation}
Consider now the first term in the right-hand side of \eqref{30-u}. If $g\in\cS(\rd)$, $\|g\|_{L^2}=1$, and $g^\ast(x):=\overline{g(-x)}$, it turns out 
\begin{align*}
|\langle a_t-a_{t_0},\delta_{z_0}\rangle|&=|\langle V_g(a_t-a_{t_0}),V_g(\delta_{z_0})\rangle|\\
&\leq \int |V_g (a_t-a_{t_0})(u,v)||T_{z_0}{g}^{\ast}(u)|\, du\, dv.
\end{align*}
This last expression tends to zero as $t\to t_0$ by the dominated convergence theorem, because $a_t\to a_{t_0}$ in $\cS'(\rdd)$ implies that $V_g (a_t-a_{t_0})\to0$ pointwise, whereas the assumption about narrow convergence yields 
\[
 |V_g (a_t-a_{t_0})(u,v)|\leq H(v),\quad t\in[0,T],\ u,v\in\rdd
\]
for some function $H\in L^1(\rdd)$. \par
Similarly, for the second term in the right-hand side of \eqref{30-u} we have 
\begin{align*}
|\langle a_t,\delta_z-\delta_{z_0}\rangle|&=|\langle V_g(a_t),V_g(\delta_{z}-\delta_{z_0})\rangle|\\
&\leq \int H(v)|e^{-2\pi i z v}T_{z}{g}^{\ast}(u)-e^{-2\pi i z_0 v}T_{z_0}{g}^{\ast}(u)|\, du\, dv
\end{align*}
for some $H\in L^1(\rdd)$, and one concludes as above.
\end{proof}

\rm The following result is essentially known. We shall give the proof for the benefit of the reader. 
 \begin{proposition}\label{proiniziale}
 Let $m\in\cM_v$, $1\leq p,q\leq\infty$. The operators $x_j, D_j$, $j=1,\ldots,d$, are bounded $\mathcal{M}^{p,q}_{v_1 m}(\rd)\to \mathcal{M}^{p,q}_{m}(\rd)$, $\mathcal{M}^{p,q}_{ m}(\rd)\to \mathcal{M}^{p,q}_{m/v_1}(\rd)$. Similarly, the operators $x^\alpha D^\beta$, $|\alpha|+|\beta|\leq 2$, are bounded $ \mathcal{M}^{p,q}_{v_2 m}(\rd)\to \mathcal{M}^{p,q}_m(\rd)$, $\mathcal{M}^{p,q}_m(\rd)\to \mathcal{M}^{p,q}_{m/v_2}(\rd)$
 \end{proposition}
 \begin{proof}
 It suffices to prove the first part of the statement. Consider the case of $x_j$ (similar arguments apply to $D_j$). Let $g\in\cS(\rd)$, $\|g\|_{L^2}=1$. We have 
 \begin{equation}\label{eqiniziale}
V_g(x_j f)(w)=\int\langle x_j \pi(z) g,\pi(w) g\rangle V_g f(z)\, dz.
\end{equation}
On the other hand, with $G(x)=x_j g(x)$,
\begin{align}\label{a19}
|\langle x_j \pi(z) g,\pi(w) g\rangle|&\leq |z||\langle\pi(z)g,\pi(w)g\rangle|+|\langle \pi(z)G,\pi(w) g\rangle|\nonumber\\
&\lesssim   |z|\langle z-w\rangle^{-M}.
\end{align}
for every $M>0$, where we used \eqref{covarianza} and the fact that the STFT of Schwartz functions is Schwartz.\par
Hence we get 
\[
|V_g(x_j f)|\lesssim v_{-M} \ast \Big(|V_g f|v_1\Big),
\]
and therefore 
\[
\|V_g (x_jf)\|_{L^{p,q}_m}\lesssim \|v_{-M}\|_{L^1_{v}}\|V_g f\|_{L^{p,q}_{mv_1}},
\]
which gives the desired boundedness of $x_j:\cM^{p,q}_{mv_1}(\rd)\to \cM^{p,q}_m(\rd)$ ($\|v_{-M}\|_{L^1_{v}}<\infty$ if $M$ is large enough, by \eqref{condpesi2}).
Similarly, from \eqref{a19} we have
\[
|\langle x_j \pi(z) g,\pi(w) g\rangle|\lesssim  |w|\langle z-w\rangle^{-M},
\]  
which gives the boundedness of $x_j:\cM^{p,q}_{m}(\rd)\to \cM^{p,q}_{m/v_1}(\rd)$
 \end{proof}
 
{\rm The following result shows the usefulness of the notion of narrow convergence.}  
\begin{proposition}\label{pro3.2} Let $\Omega\ni \zeta\mapsto a_\zeta\in M^{\infty,1}_{1\otimes v}(\rdd)$ be a continuous map from the narrow convergence. Let $m\in\cM_v$. Then the map of operators $\zeta\mapsto a_\zeta^w(x,D)$ is strongly continuous on $\mathcal{M}^{p,q}_m(\rd)$, $1\leq p,q\leq\infty$. 
\end{proposition}
\begin{proof}
It is well known, see e.g.\ \cite{g-ibero}, that the set $\{a_\zeta^w(x,D)\}$ is bounded in the space of linear continuous operators on $\mathcal{M}^{p,q}_m$. Hence it is sufficient to prove the strong continuity on $\mathcal{M}^1_m$, since $\mathcal{M}^1_m\subset \mathcal{M}^{p,q}_m$ with inclusion continuous and dense.\par
Now, we can suppose that $a_\zeta\to 0$, say for $\zeta\to\zeta_0$, for the narrow convergence. Let $g\in\cS(\rdd)$, $\|g\|_{L^2}=1$. For $f\in \mathcal{M}^1_m$, we get
\[
V_g(a_\zeta^w(x,D) f)(w)=\int \langle a_\zeta ^w(x,D) \pi(z)g,\pi(w)g \rangle V_g f(z)\, dz. 
\]
Hence
\begin{multline}\label{mul1}
\|a_\zeta^w(x,D) f\|_{\mathcal{M}^1_m}= \|V_g(a_\zeta^w(x,D) f)m\|_{L^1}\\ 
\leq \iint |\langle a_\zeta ^w(x,D) \pi(z)g,\pi(w)g \rangle|v(z-w) |V_g f(z)|m(z)  dz\, dw.
\end{multline}
Now, we have $\langle a_\zeta ^w(x,D) \pi(z)g,\pi(w)g \rangle\to0$ pointwise if $\zeta\to\zeta_0$. On the other hand it follows from Lemma \ref{lemmag} that for a new window $\Phi\in\cS(\rdd)$, with $j(z_1,z_2)=(z_2,-z_1)$, 
 \[
 |\langle a_\zeta^w(x,D) \pi(z)g,\pi(w)g \rangle|\leq \sup_{\omega\in\rdd} |V_{\Phi}(a_\zeta)(\omega,j(w-z))|\leq H(w-z),
 \]
 for some $H\in L^1_v(\rdd)$, where for the last inequality we used the hypothesis of narrow continuity of $\zeta\mapsto a_\zeta$. Since we also have $m V_g f\in L^1$ by assumption, we get $\|a_\zeta^w(x,D) f\|_{\mathcal{M}^1_m}\to 0$ from \eqref{mul1} and the dominated convergence theorem.
\end{proof}

The next results will be used often in the subsequent sections. 
\begin{proposition}\label{prod}
Let $[0,T]\ni t\mapsto a_t\in M^{\infty,1}_{1\otimes v}(\rdd)$ be continuous for the narrow convergence, and $\gamma(\tau)$ be a continuous function of $\tau\in[0,1]$.  Then the map 
\[
[0,T]\times \rdd\ni (t,\zeta)\mapsto b_{t,\zeta}(w):=\int_0^1 \gamma(\tau) a_t(\zeta+\tau w)\, d\tau,\quad w\in\rdd
\]
is still continuous in $M^{\infty,1}_{1\otimes v}(\rdd)$ for the narrow convergence. \par
The same holds true for the map
\[
[0,T]\times \rdd\ni (t,\zeta)\mapsto \tilde{b}_{t,\zeta}(w):=a_t(\zeta+w).
\]
\end{proposition}
\begin{proof}
We prove only the first part of the statement, because the last part follows similarly from an easier argument. \par
The continuity of $(t,\zeta)\mapsto b_{t,\zeta}$ in $\cS'(\rdd)$ is clear. Let us estimate the STFT of $b_{t,\zeta}$. Let $g$ be a Gaussian function, with $\|g\|_{L^2}=1$. We have 
\begin{align*}
|V_{g\otimes g}(b_{t,\zeta})(z,w)|&=\Big|\int _0^1 \gamma(\tau)V_{g\otimes g}[ a_t(\zeta+\tau\cdot)](z,w)\, d\tau\Big|\\
&\leq\int _0^1 |\gamma(\tau)|\tau^{-2d}|V_{g_{\tau}\otimes g_{\tau}}[a_t](\zeta+\tau z,\tau^{-1}w)|\, d\tau,
\end{align*}
where $g_\tau(x)=g(x/\tau)$.\par Using the change-of-window formula in Lemma \ref{changewind} we get
\[
|V_{g\otimes g}(b_{t,\zeta})(z,w)|\lesssim \int _0^1 |\gamma(\tau)|\tau^{-2d}\Big(|V_{g\otimes g}[a_t]|\ast |V_{g} g_\tau\otimes V_g g_\tau|\Big)(\zeta+\tau z,\tau^{-1}w)|\, d\tau.
\] 
We now take the supremum with respect to $z$ and we get, by Young inequality,  
\begin{equation*}
\sup_{z\in\rdd}|V_{g\otimes g}(b_{t,\zeta})(z,w)|\lesssim \int_0^1 |\gamma(\tau)| \tau^{-2d}  \Big(\sup_{z\in\rdd} | V_{g\otimes g}[a_t](z,\cdot)|\ast |V_g g_\tau|\Big)(\tau^{-1}w)\, d\tau,
\end{equation*}
where we used the estimate (see e.g.\ \cite[Formula (19)]{cn})
\begin{equation}\label{mul4}
\|V_g g_\tau\|_{L^1}\leq C,\quad 0< \tau\leq 1.
\end{equation}
Now, we have $ \sup_{z\in\rdd} | V_{g\otimes g}[a_t](z,w)|\leq H(w)$ for some $H\in L^1_v(\rdd)$, by assumption. Hence we obtain
\begin{equation}\label{mul3}
\sup_{z\in\rdd}|V_{g\otimes g}(b_{t,\zeta})(z,\cdot)|\lesssim \int_0^1 |\gamma(\tau)| \tau^{-2d} \Big(H\ast |V_g g_\tau|\Big)(\tau^{-1}\cdot)\, d\tau.
\end{equation}
It suffices to prove that this last expression is in $L^1_v$. This follows at once from the Fubini-Tonelli theorem, using the following two estimates: it turns out
\[
v(\tau\cdot)\big(H\ast|V_g g_{\tau}|\big)\lesssim (v(\tau\cdot)H)\ast|v(\tau\cdot)V_g g_{\tau}|\lesssim (vH)\ast|v(\tau\cdot)V_g g_{\tau}|,
\]
which follows because $v$ is submultiplicative and satisfies \eqref{condpesi}, and we also have
\[
\|v(\tau\cdot) V_g g_{\tau}\|_{L^1(\rdd)}\leq C,\quad 0< \tau\leq 1.
\]
In fact, an explicit computation (see e.g.\ \cite[Lemma 3.1]{cn}) shows that $| V_g g_{\tau}(z)|\lesssim \tau^{2d}\phi(\tau z)$ with $\phi$ Gaussian, whereas $v$ has polynomial growth.  
\end{proof}
\begin{proposition}\label{pro3.4} Let $a(t,\cdot)$ satisfy the assumption in Theorem \ref{teo1}, and $m\in\mathcal{M}_v$, $1\leq p,q\leq\infty$. Then the operator $a^w(t,x,D_x)$ is bounded $\mathcal{M}^{p,q}_{v_2 m}(\rd)\to \mathcal{M}^{p,q}_m(\rd)$, $\mathcal{M}^{p,q}_m(\rd)\to \mathcal{M}^{p,q}_{m/v_2}(\rd)$ and the map $t\mapsto a^w(t,\cdot)$ is strongly continuous  on the these spaces.\par
\end{proposition}
\begin{proof}
Since the operators $y^\alpha D^\beta$, $|\alpha|+|\beta|\leq 2$, are bounded 
 $\mathcal{M}^{p,q}_m(\rd)\to \mathcal{M}^{p,q}_{m/v_2}(\rd)$ (Proposition \ref{proiniziale}), by a $(j-1)$-th order Taylor expansion of $a_j(t,\cdot)$ at $(0,0)$, $j=1,2$, we are reduced to prove the result for the remainder
\begin{multline}\label{und1}
b(t,y,\eta)= 2\sum_{|\alpha|+|\beta|= 2} \frac{1}{\alpha!\beta!}\int_0^1 (1-\tau) \big(\partial^\alpha_x\partial^\beta_\xi a_2\big)(t,\tau(y,\eta))\,d\tau\, y^\alpha\eta^\beta\\
+\sum_{|\alpha|+|\beta|=1}\int_0^1 (\partial^{\alpha}_x\partial^\beta_\xi a_1)(t,\tau(y,\eta))\,d\tau\, y^\alpha\eta^\beta+a_0(t,y,\eta).
\end{multline}
Using repeatedly \eqref{comm}, \eqref{comm1} and \eqref{comm2} we can write 
\begin{align}\label{und2}
b^w(t,y,D_y)=& \sum_{|\alpha|+|\beta|\leq 2} a^w_{\alpha,\beta}(t,y,D_y) y^\alpha D_y^\beta\\
=&\sum_{|\alpha|+|\beta|\leq 2} y^\alpha D_y^\beta \tilde{a}^w_{\alpha,\beta}(t,y,D_y), \nonumber
\end{align}
where $a_{\alpha,\beta}(t,\cdot)$ and $\tilde{a}_{\alpha,\beta}(t,\cdot)$ are continuous maps in $M^{\infty,1}_{1\otimes v}(\rdd)$ for the narrow convergence by Proposition \ref{prod} and the assumption \eqref{due}. Hence the corresponding operators are strongly bounded on $\cM^{p,q}_m(\rd)$ by Proposition \ref{pro3.2}. \par
This concludes the proof.
\end{proof}\par

\end{lemma}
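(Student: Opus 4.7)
The plan is to express the Gabor matrix entry $\langle a^w\pi(z)g,\pi(w)g\rangle$, up to a $\zeta$-independent phase factor, as a single value of an STFT of $a$ with window the Wigner distribution of $g$, and then take the supremum over the first (time) variable to match the statement; the natural choice of $\Phi$ will be $W(g,g)$ itself, which lies in $\cS(\rdd)$ whenever $g\in\cS(\rd)$.

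I would start from the basic intertwining identity for the Weyl quantization,
\[
\langle a^w f_1, f_2\rangle = \langle a, W(f_2,f_1)\rangle,
\]
where $W(f_2,f_1)(x,\xi)=\int_{\rd} e^{-iy\xi} f_2(x+y/2)\overline{f_1(x-y/2)}\,dy$ is the cross-Wigner distribution (extended to the relevant distributional setting by duality). With $f_1=\pi(z)g$ and $f_2=\pi(w)g$, the problem reduces to computing the cross-Wigner distribution of two time-frequency shifted copies of the window $g$. A direct calculation --- writing $w=(a,b)$, $z=(c,d)$ and performing the change of variable $y\mapsto y-(a-c)$ in the Wigner integral to recenter the arguments of $g$ around the midpoint $(a+c)/2$ --- yields
\[
W(\pi(w)g,\pi(z)g)(\zeta) = e^{i\Phi(w,z,\zeta)}\, W(g,g)\!\left(\zeta-\tfrac{w+z}{2}\right),
\]
where the phase $\Phi(w,z,\zeta)$ is affine in $\zeta$ with linear part precisely $j(w-z)\cdot\zeta$; the sign pattern $j(z_1,z_2)=(z_2,-z_1)$ emerges automatically from the $e^{-iy\xi}$ factor combined with the modulations inside $\pi(w)$ and $\pi(z)$.

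Substituting back into the pairing and taking absolute values kills the $\zeta$-independent piece of $\Phi$, and the resulting integral is exactly the STFT of $a$ with window $W(g,g)$, evaluated at the phase-space point $((w+z)/2,\, j(w-z))$. This gives the sharp identity
\[
|\langle a^w\pi(z)g,\pi(w)g\rangle| = \left|V_{W(g,g)}\,a\!\left(\tfrac{w+z}{2},\,j(w-z)\right)\right|.
\]
Taking the supremum over the first argument then yields the stated inequality with $\Phi:=W(g,g)$.

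The only genuine technical point is tracking the phase in the Wigner computation of the second step carefully enough to identify its $\zeta$-linear part as exactly $j(w-z)\cdot\zeta$; everything else is a direct recognition of integrals as STFTs. A side benefit of the approach is that the identity above is actually an equality (with $(w+z)/2$ replacing the supremum), which is slightly stronger than required.
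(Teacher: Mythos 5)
Your proof is correct: the Weyl–Wigner intertwining $\langle a^w f_1,f_2\rangle=\langle a,W(f_2,f_1)\rangle$ combined with the covariance of the cross-Wigner distribution under time-frequency shifts gives the exact identity $|\langle a^w\pi(z)g,\pi(w)g\rangle|=|V_{W(g,g)}a(\tfrac{w+z}{2},j(w-z))|$, and taking the supremum over the first slot yields the stated bound with $\Phi=W(g,g)\in\cS(\rdd)$. The paper itself gives no proof and simply cites \cite[Lemma~3.1]{g-ibero}; your argument reconstructs precisely the proof in that reference, including the observation that the inequality is in fact an equality at $\omega=(w+z)/2$.
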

\section{A class of generalized localization operators}
We consider the Hamiltonian flow $(x^t,\xi^t)$, as a function of $t\in[0,T]$, $x,\xi\in\rd$, given by the solution of
\begin{equation}\label{sistema}
\begin{cases}
\dot{x}^t=\nabla_\xi a_2(t,x^t,\xi^t)\\
\dot{\xi}^t=-\nabla_x a_2(t,x^t,\xi^t) \\
x^0(x,\xi)=x,\ \xi^0(x,\xi)=\xi.
\end{cases}
\end{equation}
Under the assumptions of Theorems \ref{teo1}-\ref{teo4} we see that the functions $\partial^\alpha_{x,\xi} a_2(t,x,\xi)$, $|\alpha|=2$, are continuous  by Proposition \ref{proregolarita}. Moreover they are bounded, because $M^{\infty,1}_{1\otimes v}(\rdd)\subset M^{\infty,1}(\rdd)\subset L^\infty(\rdd)$. Hence the solution of the above initial value problem exists globally in time for every initial datum, and the flow is a map of class $C^1$ in all variables $t,x,\xi$. The map $\chi(t,s):(x^s,\xi^s)\to(x^t,\xi^t)$ is moreover symplectic.\par
Further consider the real-valued phase $\psi(t,x,\xi)$ defined by
\begin{equation}\label{fase}
\psi(t,x,\xi)= \int_0^t \Big(\xi^s\cdot(a_2)_\xi (s,x^s,\xi^s)-a_2(s,x^s,\xi^s)-a_1(s,x^s,\xi^s)\Big)\,ds.
\end{equation}
We now introduce the class of operators used in the next Section for the construction of the parametrix (cf.\ \cite{daubechies,tataru}).
\begin{proposition}\label{lemma2}
Let $g\in \cS(\rd)$. Let $[0,T]\times\rdd\ni (t,x,\xi)\mapsto G(t,x,\xi,\cdot)$ be a continuous map in $\cS'(\rd)$ (weakly), such that $|V_gG(t,x,\xi,\cdot)|\leq H$ for every $t,x,\xi$, for some function $H\in L^1_v(\rdd)$ (in particular we have $G(t,x,\xi,\cdot)\in M^1_v(\rd)$). \par For every $s,t\in[0,T]$, the operator 
\[
T_{t,s}f= \iint e^{i\psi(t,x,\xi)-i\psi(s,x,\xi)} \pi(x^t,\xi^t)G(t,x,\xi,\cdot) V_g f(x^s,\xi^s)\, dx\,d\xi
\]
is strongly continuous on $\mathcal{M}^p_{m}(\rd)$, $1\leq p\leq\infty$, $m\in\mathcal{M}_v$.
\end{proposition}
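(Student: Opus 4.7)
The plan is to bound the STFT of $T_{t,s}f$ pointwise by a $\chi(t,s)$-twisted convolution of $H$ with $|V_gf|$, and then exploit the symplectic (hence volume-preserving) character of $\chi$ together with the moderateness properties of $m$ to control the $L^p_m$ norm. Strong continuity will then follow via the standard density plus dominated convergence argument.

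First I would apply $V_g$ to the defining integral of $T_{t,s}f$ and, using Fubini (justified at the end via the pointwise bounds below), interchange the integration in $(x,\xi)$ with the inner product defining the STFT. Using the covariance relation \eqref{covarianza} and the standing assumption $|V_gG(t,x,\xi,\cdot)|\leq H$, this produces the pointwise estimate
\[
|V_g(T_{t,s}f)(w)|\leq \iint_{\rdd} H\bigl(w-(x^t,\xi^t)\bigr)\,|V_gf(x^s,\xi^s)|\,dx\,d\xi,\qquad w\in\rdd.
\]
Since the flow is symplectic, the Jacobian of $(x,\xi)\mapsto (x^s,\xi^s)$ is $1$, so I can change variables to $z=(x^s,\xi^s)$, noting that then $(x^t,\xi^t)=\chi(t,s)z$, obtaining
\[
|V_g(T_{t,s}f)(w)|\leq \int_{\rdd} H\bigl(w-\chi(t,s)z\bigr)\,|V_gf(z)|\,dz.
\]

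Next I would multiply by $m(w)$ and use the $v$-moderateness of $m$, $m(w)\lesssim v(w-\chi(t,s)z)\,m(\chi(t,s)z)$, together with the key invariance $m\circ\chi(t,s)\asymp m$ (a consequence of \eqref{condpesi3} and the fact that $\chi(t,s)$ and its inverse are Lipschitz on $[0,T]\times\rdd$, since the components of $\nabla_{x,\xi}a_2$ are bounded by Proposition \ref{proregolarita}). Substituting $u=\chi(t,s)z$ and applying Young's inequality yields
\[
\|V_g(T_{t,s}f)\|_{L^p_m}\lesssim \|Hv\|_{L^1}\,\|V_gf\|_{L^p_m}\lesssim \|H\|_{L^1_v}\,\|f\|_{M^p_m},
\]
with constants independent of $s,t\in[0,T]$. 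This gives the uniform boundedness of $T_{t,s}$ on $\cM^p_m(\rd)$.

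For the strong continuity claim, by density of $\cS(\rd)$ in $\cM^p_m(\rd)$ and the uniform bound just established, it suffices to prove continuity of $(t,s)\mapsto T_{t,s}f$ for $f\in\cS(\rd)$. For such $f$, $V_gf$ is rapidly decreasing, and the integrand in the defining formula for $T_{t,s}f$ depends continuously on $(t,s)$ at every point $(x,\xi)$: indeed $\chi(t,s)$ is $C^1$, $\psi$ is continuous by its very definition \eqref{fase} and the continuity of $a_1,a_2$ along the flow (Proposition \ref{proregolarita}), and $G(t,x,\xi,\cdot)$ depends continuously in $\cS'(\rd)$ on its parameters by hypothesis, hence also weakly in $M^1_v$. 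I would then apply the dominated convergence theorem at the level of STFTs using the $L^1_m$-dominating function furnished by the pointwise estimate above with $|V_gf|$ in place of $|V_gf|$ (rapidly decreasing). The main delicate point is the twisted-convolution structure involving the nonlinear map $\chi(t,s)$; the argument goes through precisely because the flow is bi-Lipschitz, volume-preserving, and depends continuously on $(t,s)$, and because $m$ is $\chi$-invariant up to equivalence. This will be the only nontrivial ingredient in an otherwise routine reduction.
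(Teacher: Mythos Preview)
Your proposal is correct and follows essentially the same strategy as the paper: pass to the STFT side, use the covariance relation \eqref{covarianza} together with the uniform bound $|V_gG|\le H$ to dominate the kernel by $H(w-\chi(t,s)z)$, exploit the symplectic change of variables and $m\circ\chi\asymp m$ to reduce to Young's inequality, and finish the strong continuity by density plus dominated convergence. The only organizational difference is that the paper performs the change of variable $(y,\eta)=\chi(t,0)(x,\xi)$ \emph{at the outset} and then factors $V_g\circ T_{t,s}=U_{t,s}R_{t,s}V_g$, where $R_{t,s}F=e^{i\phi}F\circ\chi(t,s)^{-1}$ and $U_{t,s}$ has kernel dominated by the \emph{fixed} convolution kernel $H(w-z)$; this separation makes the dominated-convergence step for strong continuity immediate, since the dominating function no longer depends on $(t,s)$ --- precisely the ``delicate point'' you flag but do not fully unwind.
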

\begin{proof}
We first apply the change of variable $(y,\eta)=\chi(t,0)(x,\xi)$, which has Jacobian $=1$, and we call again $(x,\xi)$ for the new variables; we obtains
\begin{multline*}
T_{t,s}f= \iint e^{i\psi(t,\chi(t,0)^{-1}(x,\xi))-i\psi(s,\chi(t,0)^{-1}(x,\xi))}\\
\times \pi(x,\xi)G(t,\chi(t,0)^{-1}(x,\xi),\cdot) V_g f(\chi(t,s)^{-1}(x,\xi))\, dx\,d\xi.
\end{multline*}
We can write 
\[
V_g\circ T_{t,s}=   U_{t,s} R_{t,s}V_g
\]
where $V_g: \mathcal{M}^p_m(\rd)\to L^p_m(\rdd)$ is of course bounded, and the operators $U_{t,s}$, $R_{t,s}$ are defined as follows. The operator
\[
R_{t,s} F= e^{i\psi(t,\chi^{-1}(t,0)\cdot)-i\psi(s,\chi^{-1}(t,0)\cdot)}F\circ \chi^{-1}(t,s),\quad F\in L^p_m(\rdd),
\]
is strongly continuous on $L^p_m(\rdd)$, $1\leq p<\infty$, and on the closure of the Schwartz space in $L^{\infty}_m(\rdd)$; this is straightforward to check. \par
The operator
\[
U_{t,s} F(z)= \iint \langle \pi(x,\xi) G(t,\chi(t,0)^{-1}(x,\xi),\cdot), \pi(z)g\rangle F(x,\xi)\, dx\,d\xi,\quad F\in L^p_m(\rdd),
\] 
is bounded on $L^p_m(\rdd)$, because by the assumption on $G$ and \eqref{covarianza} we can dominate its integral kernel by a convolution kernel in $L^1_v(\rdd)$:
\[
|\langle \pi(w) G(t,\chi(t,0)^{-1}(x,\xi),\cdot), \pi(z)g\rangle|\leq H(w-z).
\]
To prove the strong continuity of $U_{t,s}$ we can assume $F\in L^1_v$, and the conclusion then follows by the dominated convergence theorem, as in the proof of Proposition \ref{pro3.2}. 
\end{proof}

\section{Proofs of the main results (Theorems \ref{teo1}--\ref{teo4})}
We first construct a parametrix for the Cauchy problem \eqref{equazione}, in the form of generalized localization operator as in the previous section.  
\subsection{Construction of a parametrix}
\begin{theorem}\label{teopara} Assume the hypotheses of Theorem \ref{teo1}. For $0\leq s\leq t\leq T$ there are operators $\tilde{S}(t,s)$, ${K}(t,s)$ satisfying the following properties: 
\begin{itemize}
\item[(1)] $\tilde{S}(t,s)$ and ${K}(t,s)$ are strongly continuous on $\mathcal{M}^p_{m}(\rd)$, $1\leq p\leq\infty$;
\item[(2)] $\tilde{S}(s,s)={\rm Id}$ for every $0\leq s\leq T$;
\item[(3)] $\big(D_t+a^w(t,x,D_x)\big)\tilde{S}(t,s) f=K(t,s)f$ for every $f\in \mathcal{M}^p_{m}(\rd)$, $0\leq s\leq t\leq T$.
\end{itemize}
\end{theorem}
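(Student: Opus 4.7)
The plan is to take $\tilde{S}(t,s)$ to be the generalized localization operator of Proposition \ref{lemma2} with the constant choice $G(t,x,\xi,y)\equiv g(y)$, for a fixed Schwartz window $g\in\cS(\rd)$ normalized by $\|g\|_{L^2}=1$, and then to identify $K(t,s):=(D_t+a^w)\tilde{S}(t,s)$ as another operator in the same class, but with a new, explicit window. With this choice, property (1) is immediate from Proposition \ref{lemma2}, since $V_gg\in\cS(\rdd)\subset L^1_v(\rdd)$; property (2) reduces to the standard STFT reconstruction formula, because at $t=s$ the phase vanishes and the symplectic substitution $(y,\eta)=(x^s,\xi^s)$ (Jacobian $1$) turns the integral into $\iint\pi(y,\eta)g\,V_gf(y,\eta)\,dy\,d\eta=f$.

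The heart of the argument is (3). I would differentiate under the integral sign: $D_t$ hits the phase $e^{i\psi(t,x,\xi)}$, producing the multiplier $\partial_t\psi(t,x,\xi)=\xi^t\cdot(a_2)_\xi-a_2-a_1$ at $(t,x^t,\xi^t)$ by \eqref{fase}, and the coherent state $\pi(x^t,\xi^t)g$, where \eqref{sistema} lets me rewrite $\dot{x}^t,\dot{\xi}^t$ as $\nabla_\xi a_2,-\nabla_x a_2$ at $(t,x^t,\xi^t)$. In parallel, $a^w(t,x,D_x)$ is applied to $\pi(x^t,\xi^t)g$ by Taylor-expanding $a_2$ to order two and $a_1$ to order one about $(x^t,\xi^t)$; the Weyl-quantized linear symbols satisfy the elementary identities $(y-x^t)\pi(x^t,\xi^t)g=\pi(x^t,\xi^t)(yg)$ and $(D_y-\xi^t)\pi(x^t,\xi^t)g=\pi(x^t,\xi^t)D_yg$. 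A bookkeeping check then shows that the constant and first-order-in-deviation contributions from the derivative of $\pi(x^t,\xi^t)g$ and from the derivative of the phase cancel exactly those coming from the leading Taylor terms of $a_2^w$ and $a_1^w$, leaving only
\[
K(t,s)f=\iint e^{i\psi(t,x,\xi)-i\psi(s,x,\xi)}(r_2+r_1+a_0)^w(t,\cdot)\,\pi(x^t,\xi^t)g\,V_gf(x^s,\xi^s)\,dx\,d\xi,
\]
where $r_j(t,\cdot\,;x^t,\xi^t)$ is the integral Taylor remainder of $a_j$ at $(x^t,\xi^t)$ of order $j$.

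To bring this into the template of Proposition \ref{lemma2} I would use the Weyl covariance relation $b^w\pi(z)=\pi(z)[b(\cdot+z)]^w$ to pull $\pi(x^t,\xi^t)$ in front of the remainder operator, so that $K(t,s)$ takes the prescribed form with new window
\[
\tilde{G}(t,x,\xi,\cdot)=\bigl[r_2(t,\cdot+(x^t,\xi^t))+r_1(t,\cdot+(x^t,\xi^t))+a_0(t,\cdot+(x^t,\xi^t))\bigr]^wg.
\]
Hypothesis \eqref{due} combined with Proposition \ref{prod} shows that each of these shifted Taylor-remainder symbols is a continuous map into $M^{\infty,1}_{1\otimes v}(\rdd)$ for the narrow convergence, uniformly in the running phase-space point; then Proposition \ref{pro3.4}, applied with the fixed Schwartz window $g$, converts this into the uniform estimate $|V_g\tilde{G}(t,x,\xi,\cdot)|\leq H$ with $H\in L^1_v(\rdd)$. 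A second application of Proposition \ref{lemma2} then yields strong continuity of $K(t,s)$ on $\mathcal{M}^p_m(\rd)$.

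The main obstacle I foresee is precisely this last step: securing the \emph{uniform} $L^1_v$-domination of $V_g\tilde{G}(t,x,\xi,\cdot)$ jointly in $(t,x,\xi)$. This is where the narrow-convergence hypothesis does real work, and Proposition \ref{prod} is tailor-made for it, since it shows that both the translation by the Hamiltonian-flow point $(x^t,\xi^t)$ and the $\int_0^1$ appearing in the integral Taylor remainder preserve narrow continuity in $M^{\infty,1}_{1\otimes v}$. Once this is in hand, the three required properties are assembled by invoking the preliminary results of Sections 3 and 4.
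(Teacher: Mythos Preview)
Your outline coincides with the paper's proof right up to the last step, but that step contains a real gap. The symbol $b(t,x,\xi,y,\eta)=r_2+r_1+a_0(\cdot+(x^t,\xi^t))$ that you apply to $g$ is \emph{not} in $M^{\infty,1}_{1\otimes v}(\rdd)$: the integral Taylor remainders carry the polynomial factors $y^\alpha\eta^\beta$, $|\alpha|+|\beta|\leq 2$, and multiplication by an unbounded polynomial destroys the $M^{\infty,1}$ property. Proposition \ref{prod} only controls the \emph{coefficient} integrals $\int_0^1(1-\tau)\partial^\alpha a_2(t,(x^t,\xi^t)+\tau\cdot)\,d\tau$, not the full remainder. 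And Proposition \ref{pro3.4} is in any case the wrong citation: it yields operator-norm bounds $\cM^{p,q}_{v_2m}\to\cM^{p,q}_m$, not the pointwise domination $|V_g\tilde G(t,x,\xi,\cdot)(w)|\leq H(w)$ with $H\in L^1_v$ that Proposition \ref{lemma2} requires.

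What the paper does at this point---and what you are missing---is to use the commutator identities \eqref{comm1}, \eqref{comm2} to factorize
\[
b^w(t,x,\xi,y,D_y)=\sum_{|\alpha|+|\beta|\leq 2} a_{\alpha,\beta}^w(t,x,\xi,y,D_y)\,y^\alpha D_y^\beta,
\]
where now each $a_{\alpha,\beta}$ \emph{is} narrowly continuous in $M^{\infty,1}_{1\otimes v}(\rdd)$. This factorization costs up to two extra derivatives on the coefficient symbols, and is exactly why hypothesis \eqref{due} asks for $\partial^\alpha a_j$ with $j\leq|\alpha|\leq 2j$ (e.g.\ derivatives of $a_2$ up to order $4$), a point your write-up never uses. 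Once factorized, $b^w g=\sum a_{\alpha,\beta}^w\gamma_{\alpha,\beta}$ with $\gamma_{\alpha,\beta}=y^\alpha D^\beta g\in\cS(\rd)$, and then the pointwise bound $|\langle a_{\alpha,\beta}^w\gamma_{\alpha,\beta},\pi(w)g\rangle|\leq H(w)$ follows from Lemma \ref{changewind} (to reduce to equal windows) and Lemma \ref{lemmag} combined with the narrow-convergence hypothesis. Insert this factorization step and replace the appeal to Proposition \ref{pro3.4} by Lemma \ref{lemmag}, and your argument becomes the paper's.
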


\begin{proof}
We set 
\[
\tilde{S}(t,s)f=\iint e^{i\psi(t,x,\xi)-i\psi(s,x,\xi)} \pi(x^t,\xi^t)g(\cdot) V_g f(x^s,\xi^s)\, dx\,d\xi,
\]
where $g$ is any Schwartz function, with $\|g\|_{L^2}=1$. Property (2) then follows from the inversion formula \cite[Corollary 3.2.3]{book}
\[
f= \iint \pi(x,\xi)g(\cdot) V_g f(x,\xi)\, dx\,d\xi
\]
and the change of variable $(y,\eta)= (x^s,\xi^s)$, which has Jacobian $=1$.\par
The strong continuity of $\tilde{S}(t,s)$ in (1) is a consequence of Proposition \ref{lemma2}. Let us compute $
\big(D_t+a^w(t,x,D_x)\big)\tilde{S}(t,s) f$. We have by an explicit computation, using \eqref{sistema} and \eqref{fase}, 
\begin{align}\label{p4}
&D_t \Big(e^{i\psi(t,x,\xi)}\pi(x^t,\xi^t) g\Big)(y)=e^{i\psi(t,x,\xi)}\pi(x^t,\xi^t)[\partial_t\psi g+\dot{\xi}^t\cdot yg-\xi^t\cdot \dot{x}^tg-\dot{x}^t\cdot (-i\nabla_y) g]  \\
&=e^{i\psi(t,x,\xi)}\pi(x^t,\xi^t)[-a_2(t,x^t,\xi^t)g-\nabla_x a_2(t,x^t,\xi^t)\cdot y g\nonumber\\
&\qquad\qquad\qquad\qquad\qquad\qquad\qquad-\nabla_{\xi} a_2(t,x^t,\xi^t)\cdot (-i\nabla_y) g-a_1(t,x^t,\xi^t)g]\nonumber
\end{align}
whereas (cf.\ the symplectic invariance of the Weyl calculus in \cite[Theorem 18.5.9]{hormanderIII})
\[
a^w(t,y,D_y) \Big(e^{i\psi(t,x,\xi)}\pi(x^t,\xi^t) g\Big)=e^{i\psi(t,x,\xi)}\pi(x^t,\xi^t)a^w(t,y+x^t,D_y+\xi^t)g.
\]
Hence we obtain
\[
\Big(D_t +a^w(t,y,D_y) \Big)e^{i\psi(t,x,\xi)}\pi(x^t,\xi^t)g=e^{i\psi(t,x,\xi)}\pi(x^t,\xi^t)[b^w(t,y,D_y) g],
\]
with
\begin{align}\label{p5}
b(t,x,\xi,y,\eta)&= a(t,y+x^t,\eta+\xi^t)-a_2(t,x^t,\xi^t)-\nabla_x a_2(t,x^t,\xi^t)y\\
&\qquad\qquad\qquad\qquad\qquad-\nabla_\xi a_2(t,x^t,\xi^t)\eta-a_1(t,x^t,\xi^t)\nonumber\\
&=2\sum_{|\alpha|+|\beta|= 2}\frac{1}{\alpha!\beta!}\int_0^1(1-\tau)\partial^\alpha_x\partial^\beta_\xi a_2(t,(x^t,\xi^t)+\tau (y,\eta))\, d\tau\, y^\alpha \eta^\beta\nonumber\\
&\quad+\sum_{|\alpha|+|\beta|=1}\int_0^1\partial^\alpha_x\partial^\beta_\xi a_1(t,(x^t,\xi^t)+\tau (y,\eta))\, d\tau\, y^\alpha \eta^\beta\nonumber\\
&\quad+a_0(t,x^t+y,\xi^t+\eta).\nonumber
\end{align}
Using repeatedly \eqref{comm1} and \eqref{comm2}, the assumption \eqref{due} and Proposition \ref{prod} we can write 
\begin{equation}\label{eq0}
b^w(t,x,\xi,y,D_y)= \sum_{|\alpha|+|\beta|\leq 2} a^w_{\alpha,\beta}(t,x,\xi,y,D_y) y^\alpha D_y^\beta
\end{equation}
where $a_{\alpha,\beta}(t,x,\xi,y,\eta)$ are a family of symbols in $M^{\infty,1}_{1\otimes v}(\rdd)$, continuous with respect to the narrow convergence as a function of $t,x,\xi$. \par
Hence we get
\[
\big(D_t+a^w(t,x,D_x)\big)\tilde{S}(t,s) f=K(t,s)f
\]
with 
\[
K(t,s)f=\iint e^{i\psi(t,x,\xi)-i\psi(s,x,\xi)} \pi(x^t,\xi^t)[b^w(t,x,\xi,y,D_y)g] V_g f(x^s,\xi^s)\, dx\,d\xi.
\] 
To see that this operator enjoys the properties in (1), by Proposition \ref{lemma2} we have to verify that $G(t,x,\xi,\cdot):=b^w(t,x,\xi,y,D_y)g$ is continuous as a function of $t,x,\xi$ in $\cS'(\rd)$, which is clear, and that 
\begin{equation}\label{p6}
\sup_{z\in\rdd}|V_g G(t,x,\xi;z,w)|\leq H(w)\quad t\in[0,T],\, (x,\xi),w\in\rdd
\end{equation}
for some function $H\in L^1_v(\rdd)$ independent of $t,x,\xi$. To this end observe that, using \eqref{eq0}, we are left to prove that if $c(t,x,\xi,\cdot,\cdot)$ is a family of symbols in $M^{\infty,1}_{1\otimes v}(\rdd)$, continuous for the narrow convergence with respect to $t,x,\xi$, it turns out, for $g,\gamma\in\cS(\rd)$, 
\[
|\langle c^w(t,x,\xi,y,D_y)\gamma,\pi(w)g\rangle|\leq H(w)
\]
for some function $H\in L^1_v(\rdd)$. Lemma \ref{changewind} reduces matters to the case $g=\gamma$, where one can then conclude by applying Lemma \ref{lemmag}.
\end{proof}

\subsection{Proof of Theorem \ref{teo1}}
\subsubsection{Existence} Let us prove that there exists a strongly continuous  operator $S(t,s)$ on $\mathcal{M}^p_m(\rd)$ such that $S(s,s)={\rm Id}$, and
\begin{equation}\label{p1}
(D_t+a^w(t,x,D_x))S(t,s)u_0=0, \quad u_0\in \mathcal{M}^p_m(\rd).
\end{equation}
First we observe that for $K(t,s)$ as in Theorem \ref{teopara}, for every $u_0\in \mathcal{M}^p_m(\rd)$ there exists a unique solution $v\in C([0,T],\mathcal{M}^p_m(\rd))$ to the Volterra equation
\[
v(t)=-K(t,0) u_0-i\int_0^t K(t,s) v(s)\, ds.
\]
This is a consequence of the contraction mapping theorem applied in the space $C([0,T],\cM^p_m(\rd))$ endowed with the norm $\sup_{t\in[0,T]}e^{-\lambda t}\|u(t)\|_{\cM^p_m}$, with 
\[
\lambda>\sup_{0\leq s\leq t\leq T}\|K(t,s)\|_{\cM^p_m \to \cM^p_m}.
\] Moreover one obtains $\|v(t)\|_{\mathcal{M}^p_m}\lesssim \|u_0\|_{\mathcal{M}^p_m}$, $t\in [0,T]$.\par
 If $v$ is such a solution, we then set
\[
S(t,s)u_0=\tilde{S}(t,0)u_0+i\int_0^t \tilde{S}(t,s) v(s)\, ds,
\]
where $\tilde{S}(t,s)$ is the parametrix in Theorem \ref{teopara}. It is then straightforward to check that \eqref{p1} holds true, as well as the claimed boundedness of the propagator. 

\subsubsection{Uniqueness} 
We have to prove that if $u\in C([0,T],\mathcal{M}^p_m(\rd))$ is a solution to the equation in \eqref{equazione} and $u_0=u(0)=0$, then $u=0$. Let $g\in\cS(\rd)$. By Proposition \ref{pro3.4} we have in particular $u\in C^1([0,T], \mathcal{M}^p_{ m/v_2}(\rd))\subset C^1([0,T], \cS'(\rd))$, so that $V_g u(t)(x^t,\xi^t)$ is continuously differentiable as a function of $t,x,\xi$. By the fundamental theorem of calculus we have the pointwise equality 
\[
e^{-i\psi(t,x,\xi)} V_g(u(t))(x^t,\xi^t)=i\int_0^t D_s \Big(e^{-i\psi(s,x,\xi)} V_g(u(s))(x^s,\xi^s)\Big)\, ds.
\]
On the other hand it turns out
\begin{align*}
D_s \Big(e^{-i\psi(s,x,\xi)}& V_g(u(s))(x^s,\xi^s)\Big)=D_s \langle u(s),e^{i\psi(t,x,\xi)}\pi(x^s,\xi^s)g\rangle\\
&=\langle D_s u(s),e^{i\psi(s,x,\xi)}\pi(x^s,\xi^s)g\rangle-\langle u(s), D_s \big(e^{i\psi(s,x,\xi)}\pi(x^s,\xi^s)g \big)\rangle 
\end{align*}
where we used that the map $s\mapsto e^{i\psi(s,x,\xi)}\pi(x^s,\xi^s)g$ is continuously differentiable in $\cS(\rd)$. Using \eqref{p4} and \eqref{p5} we obtain\footnote{The duality pairing is understood for the pairs of spaces $\cM^p_m-\cM^1_v$ and $\cM^p_{m/v_2}-\cM^1_{v_2v}$; cf. Proposition \ref{pro3.4}.}
\begin{align*}
\langle u(s)&, D_s \Big(e^{i\psi(s,x,\xi)}\pi(x^s,\xi^s)g \Big)\rangle\\
&=\langle u(s),-a^w(s,x,D)e^{i\psi(s,x,\xi)}\pi(x^s,\xi^s)g+ e^{i\psi(s,x,\xi)}\pi(x^s,\xi^s)G(s,x,\xi,\cdot)\rangle\\
&=\langle -a^w(s,x,D)u(s),e^{i\psi(s,x,\xi)}\pi(x^s,\xi^s)g\rangle+\langle u(s),e^{i\psi(s,x,\xi)}\pi(x^s,\xi^s)G(s,x,\xi,\cdot)\rangle,
\end{align*}
where $G(s,x,\xi,\cdot)= b^w(s,x,\xi,y,D_y)g$ (same notation as in the proof of Theorem \ref{teopara}). \par
Summing up we get
\begin{align}\label{p8}
e^{-i\psi(t,x,\xi)} V_g(u(t))(x^t,\xi^t)= i\int_0^t \langle \underbrace{D_s u(s)+a^w(s,x,D)u(s)}_{=0}, e^{i\psi(s,x,\xi)}\pi(x^s,\xi^s)g\rangle\\
-  \langle u(s),e^{i\psi(s,x,\xi)}\pi(x^s,\xi^s)G(s,x,\xi,\cdot)\rangle\, ds.  \nonumber
\end{align}
Using \eqref{changewind} we can estimate the last term as 
\[
|\langle u(s),e^{i\psi(s,x,\xi)}\pi(x^s,\xi^s)G(s,x,\xi,\cdot)\rangle|\lesssim \Big(|V_g u(s)|\ast |V_g G(s,x,\xi,\cdot)|\Big)(x^s,\xi^s).
\]
Since \eqref{p6} holds true for some $H\in L^1_v(\rdd)$, by Young's inequality we obtain
\[
\|\langle u(s),e^{i\psi(s,x,\xi)}\pi(x^s,\xi^s)G(s,x,\xi,\cdot)\rangle\|_{L^p_{m}(\R^{2d}_{x,\xi})}\lesssim \|V_g u(s)\|_{L^p_m}=\|u(s)\|_{\mathcal{M}^p_m}.
\]
We can then estimate in \eqref{p8}
\[  
\|u(t)\|_{\mathcal{M}^p_m}=\|V_g(u(t))\|_{L^p_m}\leq C\int_0^t \|u(s)\|_{\mathcal{M}^p_m}\, ds,\quad t\in[0,T],
\]
and therefore $u(t)=0$ for every $t\in[0,T]$, by Gronwall's inequality.
\subsection{Proof of Theorem \ref{teo2}}
A carefully inspection of the proof of Theorem \ref{teo1} and the needed preliminary results shows that the only point were we used the condition on $\partial^\alpha a_2(t,\cdot)$ for $|\alpha|=3,4$, and $\partial^\alpha a_1(t,\cdot)$ for $|\alpha|=2$, is to pass to the algebraic expression in \eqref{und1} to the corresponding quantization in the form \eqref{und2} -- where, using \eqref{comm}--\eqref{comm2}, additional derivatives fall on the symbol-- and similarly in the proof of Theorem \ref{teopara} (from \eqref{p5} to \eqref{eq0}). However, when the symbol $a(t,x,\xi)$ has the special form in Theorem \ref{teo2} the factorization at the level of symbol for $\sigma_j(t,\xi)$, $V_j(t,x)$ gives a corresponding exact factorization at the level of operators, and therefore the conditions $\partial^\alpha a_2(t,\cdot)\in M^{\infty,1}_{1\otimes v}(\rd)$, $|\alpha|=2$, and $\partial^\alpha a_1(t,\cdot)\in M^{\infty,1}_{1\otimes v}(\rd)$, $|\alpha|=1$, suffice in that case.
\subsection{Proof of Theorem \ref{teo3}}
 It suffices to prove that, under the present assumptions, the results in Proposition \ref{lemma2} and therefore those in Theorem \ref{teopara} still hold for the modulation spaces $\cM^{p,q}_m(\rd)$, $1\leq p,q\leq\infty$. In fact, one is left to prove that the map $F\mapsto F\circ\chi^{-1}(t,s)$ is bounded of $L^{p,q}_m(\rdd)$. Now, we have $a_2(t,x,\xi)=\sigma_2(t,\xi)$, so that the symplectic map $\chi$ (we omit the dependence on $s,t$ for brevity) has the special form $\chi(y,\eta)=(\beta(y,\eta),\eta)$, $\chi^{-1}(y,\eta)=(\tilde{\beta}(y,\eta),\eta)$,  with ${\rm det}\, \partial\beta/\partial y=1$. Hence the change of variable $y=\beta(y',\eta)$ (for fixed $\eta$) gives
\[
\|F(\tilde{\beta}(y,\eta),\eta)m(y,\eta)\|_{L^q(\rd_\eta;L^p(\rd_y))}=\|F(y',\eta)m(\beta(y',\eta),\eta)\|_{L^q(\rd_\eta;L^p(\rd_{y'}))}.
\]
However $m(\beta(y',\eta),\eta)=m(\chi(y',\eta))\asymp m(y',\eta)$ since $\chi$ is a Lipschitz map, and \eqref{condpesi3} holds by assumption. This concludes the proof.
\subsection{Proof of Theorem \ref{teo4}}
We need the following result from \cite[Proposition 2.5]{cnr3}. 
\begin{proposition}\label{sopra}
Let $h\in C^\infty(\rd\setminus\{0\})$ be positively homogeneous of degree $\kappa>0$, i.e.\ $h(\lambda x)=\lambda^\kappa h(x)$ for $x\not=0$, $\lambda>0$, and $\chi\in C^\infty_0(\rd)$. Set $f=h\chi$. Then, for $g\in\cS(\rd)$ there exists a constant $C>0$ such that
\[
|V_g f(x,\xi)|\leq C(1+|\xi|)^{-\kappa-d},\quad x,\xi\in\rd.
\]
 \end{proposition}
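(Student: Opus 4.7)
The plan is to realize the STFT as a Fourier transform and reduce the estimate to the Fourier decay of a compactly supported function whose only non-smoothness lies at the origin, then handle that non-smoothness by a dyadic decomposition matched to the homogeneity of $h$.

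First, I would use the identity $V_g f(x,\xi) = \cF(f\cdot \overline{T_x g})(\xi)$ to write
\[
V_g f(x,\xi) = \widehat{h\psi_x}(\xi), \qquad \psi_x(y) := \chi(y)\overline{g(y-x)},
\]
where $\psi_x \in \coi(\rd)$ is supported in the fixed compact set $\supp\chi$, its seminorms $\|\partial^\alpha \psi_x\|_{L^\infty}$ are bounded uniformly in $x$, and in fact decay like $(1+|x|)^{-N}$ for every $N$ by the Schwartz decay of $g$. Hence it suffices to prove $|\widehat{h\psi_x}(\xi)|\lesssim(1+|\xi|)^{-\kappa-d}$ uniformly in $x$.

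Next, I would fix a dyadic partition of unity on $\rd\setminus\{0\}$: pick $\rho\in \coi(\rd)$ supported in $\{1/2\leq|y|\leq 2\}$ with $\sum_{j\in\ZZ}\rho(2^{-j}y)=1$ for $y\neq 0$, and split
\[
h(y)\psi_x(y)=\sum_{j\leq J_0} h_{j,x}(y),\qquad h_{j,x}(y):=\rho(2^{-j}y)h(y)\psi_x(y),
\]
where only indices $j\leq J_0$ (depending on $\supp\chi$) contribute since $\supp\psi_x$ is contained in a fixed compact set. The change of variable $y=2^j z$ combined with the homogeneity $h(2^j z)=2^{j\kappa}h(z)$ yields
\[
\widehat{h_{j,x}}(\xi)=2^{j(d+\kappa)}\widehat{u_{j,x}}(2^j\xi),\qquad u_{j,x}(z):=\rho(z)h(z)\psi_x(2^j z).
\]
Each $u_{j,x}$ is smooth and supported in the fixed annulus $\{1/2\leq|z|\leq 2\}$, where $\rho h$ is smooth; since $j\leq J_0$, the factors $2^{j|\alpha|}$ arising when differentiating $\psi_x(2^j\cdot)$ stay bounded, so every seminorm $\|\partial^\alpha u_{j,x}\|_{L^\infty}$ is uniformly controlled in $j$ and $x$. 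Consequently $|\widehat{u_{j,x}}(\eta)|\leq C_N(1+|\eta|)^{-N}$ for every $N$, giving
\[
|\widehat{h_{j,x}}(\xi)|\leq C_N \,2^{j(d+\kappa)}(1+2^j|\xi|)^{-N}.
\]

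The final step is the summation over $j\leq J_0$. Choosing $N>d+\kappa$ and splitting the range according to whether $2^j|\xi|\lessgtr 1$, the ``low-frequency'' tail contributes a geometric series bounded by a constant times $|\xi|^{-(d+\kappa)}$ (using $d+\kappa>0$ to sum up to the top index $2^j\sim|\xi|^{-1}$), while the ``high-frequency'' tail contributes $|\xi|^{-N}\sum_{2^j\gtrsim|\xi|^{-1}}2^{j(d+\kappa-N)}\lesssim|\xi|^{-(d+\kappa)}$. Adding the two gives the bound for $|\xi|\geq 1$, and for bounded $\xi$ the estimate is trivial from the finite-sum bound. The main obstacle is really extracting precisely the exponent $\kappa+d$: a naive integration-by-parts argument saves only $\lfloor\kappa\rfloor$ derivatives (since $h\chi$ is merely H\"older at $0$), so one genuinely needs the homogeneous rescaling to produce both the correct prefactor $2^{j(d+\kappa)}$ and the rapid decay of $\widehat{u_{j,x}}$ on each dyadic shell.
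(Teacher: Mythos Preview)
Your argument is correct. The identity $|V_g f(x,\xi)|=|\widehat{h\psi_x}(\xi)|$ with $\psi_x=\chi\,\overline{T_x g}$ holds (up to the harmless unimodular factor $e^{i\xi x}$ coming from the paper's normalization of $\pi(x,\xi)$), the dyadic rescaling exploiting the homogeneity of $h$ is carried out cleanly, and the summation over $j\le J_0$ with the split at $2^j|\xi|\sim 1$ produces exactly the exponent $d+\kappa$. The only minor remark is that the aside about the extra decay $(1+|x|)^{-N}$ of the seminorms of $\psi_x$ is not needed for the stated bound (uniform boundedness in $x$ suffices), though it does no harm.

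As for comparison with the paper: there is nothing to compare. The paper does not prove this proposition; it merely quotes it from \cite[Proposition 2.5]{cnr3} and uses it as a black box in the proof of Theorem~\ref{teo4}. Your self-contained dyadic argument is the standard way to establish such Fourier decay for compactly supported functions with an isolated homogeneous singularity, and it is presumably close in spirit to what appears in the cited reference.
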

 One can choose a cut-off function $\chi\in C_0^\infty(\rd)$, $\chi=1$ in a neighborhood of the origin,  and write the equation as 
\[
D_tu+a^w(t,x,D_x) u=0
\]
with $a(t,x,\xi)=\sum_{j=0}^2 a_j(t,x,\xi)$, where $a_2(t,x,\xi)=|\xi|^\kappa(1-\chi(\xi))+V_2(t,x)$, $a_1(t,x,\xi)=V_1(t,x)$, $a_0(t,x,\xi)=\chi(\xi)|\xi|^\kappa+V_0(t,x)$. By Proposition \ref{sopra} we have $a_0(t,\cdot)\in M^{\infty,1}_{1\otimes v_r}$ for every $0\leq r<\kappa$. Hence, the conclusion follows at once from Theorems \ref{teo2}, \ref{teo3} (with $v=v_r$). 
\begin{remark}\label{remark5.2}\rm
Let us observe that the initial value problem for $D_t+\Delta$ is not wellposed in $M^\infty$. In fact, if the initial datum is $u_0=\delta\in M^\infty$, the solution $S(t,0)u_0=\Fur^{-1}(e^{it|\cdot|^2})$ verifies (by the invariance of $M^\infty(\rd)$ under Fourier transform)
\[
\|S(t,0)u_0-u_0\|_{M^\infty}\gtrsim \|\Fur(S(t,0)u_0-u_0)\|_{M^\infty}=\|e^{it|\cdot|^2}-1\|_{M^\infty}.
\]
Now, using the window $g(x)=e^{-\pi|\xi|^2}$, an explicit computation (cf.\ \cite[Theorem 6]{bertinoro3}, modified according to our normalization) shows that
\[
|V_g(e^{it|\cdot|^2})(x,\xi)|=(1+t^2)^{-d/4}\exp\Big(-\frac{|\xi-tx|^2}{4\pi(1+t^2)}\Big),
\]
which, if $t\not=0$, tends to $0$ as $|x|\to+\infty$ for every fixed $\xi\in\rd$. On the other hand, 
\[
|V_g(1)(x,\xi)|=e^{-|\xi|^2/(4\pi)}
\]
is independent of $x$. Hence we see that $\sup_{x\in\rd}|V_g(e^{it|\cdot|^2} -1)(x,\xi)|\geq e^{-|\xi|^2/(4\pi)}$ for every  $\xi\in\rd$. As a consequence, $\|S(t,0)u_0-u_0\|_{M^\infty}\geq 1$ does not tend to zero as $t\to0$.

\end{remark}
\section{Nonlinear Schr\"odinger equations} 
In this section we briefly discuss the extension of the above results in presence of a non-linearity of the type $F(u)$, where the function $F:\mathbb{C}\to\mathbb{C}$ is entire real-analytic, with $F(0)=0$ ($F(z)$ has a Taylor expansion in $z,\overline{z}$, valid in the whole complex plane). In particular we can take a polynomial in $z,\overline{z}$.\par To avoid repetitions we summarize the results in a unique statement.
\begin{theorem} Each of the Theorems \ref{teo1}--\ref{teo4} admits a nonlinear variant. Namely, under those assumptions, we have local wellposedness in $\cM^1_m$ or even in $M^{p,1}_m$ (according to the corresponding linear result), for $1\leq p\leq\infty$, $m\in\cM_v$, $m\gtrsim 1$, of the initial value problem for the nonlinear equation $Lu=F(u)$, where $L=D_t+ a^w(t,x,D_x)$ is the linear operator, with $F(u)$ as specified above. 
\end{theorem}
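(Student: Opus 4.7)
The plan is to reduce the nonlinear Cauchy problem $Lu=F(u)$, $u(0)=u_0$, to the integral (Duhamel) equation
\[
u(t)=S(t,0)u_0+i\int_0^t S(t,s)F(u(s))\,ds,
\]
where $S(t,s)$ is the propagator of the linear equation constructed in Theorems \ref{teo1}--\ref{teo4}, and then to apply the contraction mapping theorem in a small ball of the Banach space $C([0,T'],X)$, where $X=\cM^1_m$ or $X=M^{p,1}_m$ depending on which linear theorem is invoked, and $T'\in (0,T]$ is to be chosen small.

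The key preliminary step is the Banach algebra property: for $1\leq p\leq\infty$ and $m\in\cM_v$ with $m\gtrsim 1$, one has $\|fg\|_{M^{p,1}_m}\lesssim \|f\|_{M^{p,1}_m}\|g\|_{M^{p,1}_m}$, and similarly for $\cM^1_m$. This follows from the convolution relations for STFTs on $L^{p,1}_m$ together with the submultiplicativity of $v$ and the assumption $m\gtrsim 1$; it is the nonlinear analogue used in \cite{bertinoro2,bertinoro12}. Granting this, expand $F(z)=\sum_{n+k\geq 1}c_{n,k}z^n\bar{z}^k$, convergent for every $z\in\bC$, and deduce
\[
\|F(u)\|_{X}\leq \sum_{n+k\geq 1}|c_{n,k}|C^{n+k-1}\|u\|_X^{n+k}=:\Phi(\|u\|_X),
\]
where $\Phi$ is an entire function of $\|u\|_X$ with $\Phi(0)=0$. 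A similar telescoping argument, using $u^n\bar u^k-v^n\bar v^k=\sum_{j}(u^{j}\bar u^{k}-u^{j-1}v\bar u^k)+\cdots$, gives the Lipschitz bound
\[
\|F(u)-F(v)\|_X\leq \Psi\bigl(\max(\|u\|_X,\|v\|_X)\bigr)\|u-v\|_X,
\]
with $\Psi$ an entire function vanishing at $0$.

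Next define, for $R>0$ and $T'>0$, the closed ball $B_R=\{u\in C([0,T'],X):\sup_{t\in[0,T']}\|u(t)\|_X\leq R\}$, and the map
\[
\mathcal{T}(u)(t)=S(t,0)u_0+i\int_0^t S(t,s)F(u(s))\,ds.
\]
Since $S(t,s)$ is strongly continuous and uniformly bounded on $X$ by Theorems \ref{teo1}--\ref{teo4} with some constant $M\geq 1$, choosing $R:=2M\|u_0\|_X$ and $T'$ so small that $M T' \Phi(R)\leq R/2$ and $M T' \Psi(R)<1/2$ ensures both that $\mathcal{T}$ maps $B_R$ into itself and that it is a strict contraction on $B_R$. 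The unique fixed point provides the desired local solution $u\in C([0,T'],X)$, and standard arguments give continuous dependence on $u_0$ as well as the blow-up alternative. Uniqueness in the full class $C([0,T'],X)$ follows by repeating the Lipschitz estimate and Gronwall's inequality, as at the end of the proof of Theorem \ref{teo1}.

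The main obstacle is the Banach algebra estimate on $M^{p,1}_m$ (and on its Schwartz closure $\cM^1_m$ in the case $p=1$): one must verify that pointwise multiplication is continuous in the modulation space norm with weight $m$, and that the estimates are uniform enough to sum the Taylor series of $F$. A secondary technical point is to check that if $u(s)\in\cM^1_m$ for each $s$, then $F(u(s))$ belongs to the same closure of the Schwartz space (not merely to the larger $M^{p,1}_m$); this follows because polynomials in $u,\bar u$ preserve the Schwartz closure by density, and the uniform convergence of the Taylor series in the $X$-norm transfers this property to $F(u)$.
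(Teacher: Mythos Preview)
Your proposal is correct and follows essentially the same route as the paper's own proof: the Duhamel formulation plus a contraction argument in $C([0,T'],X)$, with $X=\cM^1_m$ or $M^{p,1}_m$, relying on the Banach algebra property of these spaces when $m\gtrsim 1$. The only point the paper treats more carefully is the \emph{joint} strong continuity of $S(t,s)$ in $(s,t)$ (needed so that $\mathcal{T}(u)\in C([0,T'],X)$), which it derives from the linear results via time-reversibility and the estimate $\|S(t,s)u_0-S(t,s')u_0\|_X\lesssim\|u_0-S(s,s')u_0\|_X$.
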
 
It is understood that $v=v_r$, $0\leq r<\kappa$, in the case of Theorem  \ref{teo4}. 
\begin{proof}
The argument is standard; cf.\ \cite[Theorem 6.1]{baoxiang}. Namely, we apply the contraction mapping theorem in the space $X:=C([0,T_0],\cM^{1}_m(\rd))$, with $T_0$ small enough (or $X:=C([0,T_0],\cM^{p,1}_m(\rd))$, when appropriate). \par By the Duhamel principle we can rewrite the semilinear equation in integral form 
\[
u(t)=S(t,0) u_0- i\int_0^t S(t,s) F(u(s))\,d s
\]
where $S(t,s)$, $0\leq s\leq  t\leq T$, is the linear propagator corresponding to initial data at time $s$.  
The classical iteration scheme works in $X$ if the following properties are verified:
\begin{itemize}
\item[\bf a)] $S(t,s)$ is strongly continuous on $\cM^{1}_s(\rd)$ for $0\leq s\leq t\leq T$ (which also implies a uniform bound for the operator norm with respect to $s,t$, by the uniform boundedness principle);
\item[\bf b)] We have $\|F(u)-F(v)\|_X\leq C\|u-v\|_X$, for $u,v\in X$ in every fixed ball.
\end{itemize}
The estimate in {\bf b)} was proved in \cite[Formula (28)]{bertinoro12} in the case $m(x,\xi)=\langle \xi\rangle^s$, $s\geq0$, but the same proof extends to any weight $m\in \cM_v$ satisfying $m\gtrsim 1$, since one just uses the fact that $\cM^1_m$ is a Banach algebra for pointwise multiplication (the same holds for $\cM^{p,1}_m$). \par
\par
As far as {\bf a)} is concerned, it follows from the above linear results that $S(t,s)$ is bounded on $\cM^{1}_m$ uniformly with respect to $s,t$ and it is strongly continuous in $\cM^{1}_m$ as a function of $t$, for fixed $s$. The same holds for $s,t$ exchanged, because the equation is time-reversible. To prove its strong continuity jointly in $(t,s)$ we observe that if $s'\leq s\leq t$, $u_0\in \cM^1_m(\rd)$,
 \begin{align*}
\|S(t,s)u_0-S(t,s')u_0\|_{\cM^1_m}&\leq C_1\|u_0-S(s,s')u_0\|_{\cM^1_m}.
\end{align*} 
Hence, the map $s\mapsto S(t,s)u_0$ is in fact continuous in $\cM^1_m(\rd)$ uniformly with respect to $t$. This yields the strong continuity of $S(t,s)$ on $\cM^1_m$, as a function of $s,t$ and concludes the proof.
\end{proof}



\medskip
Received xxxx 20xx; revised xxxx 20xx.
\medskip

\end{document}